\author{Habib Alizadeh}
\date{} 
\newtheorem{theorem}{Theorem}[section]
\newtheorem{lemma}[theorem]{Lemma}
\newtheorem{question}[theorem]{Question}
\newtheorem{remark}[theorem]{Remark}
\newtheorem{definition}[theorem]{Definition}
\newtheorem{def/theorem}[theorem]{Theorem/Definition}
\newcommand{\norm}[1]{\left\lVert#1\right\rVert}
\begin{document}

\begin{center}
\huge{Hamiltonian fragmentation in dimension four with application to spectral estimators} \\
\vspace{0.5cm}
\large{Habib Alizadeh}
\end{center}
\vspace{1cm}

\begin{abstract}
\sloppy We prove a new Hamiltonian extension and consequently a fragmentation result in dimension $4$ for the symplectic manifold $\mathbb{D}^{2}\times \mathbb{D}^{2}$. Polterovich and Shelukhin have recently constructed a family of functionals on the space of time dependent Hamiltonian functions on $S^{2}(1) \times S^{2}(a)$ for certain rational $0 < a < 1$, called Lagrangian spectral estimators. Using our fragmentation result we prove that the restriction of their functionals to the subdomain $\mathbb{D}^{2}(c) \times \mathbb{D}^{2}(a)$ is a uniformly $C^{0}$-continuous functional where $0 < c < 1$. As an application of our results, we show that the complement of a Hofer ball in the group of compactly supported Hamiltonian diffeomorphisms of $\mathbb{D}^{2}(c)\times \mathbb{D}^{2}(a)$ contains a $C^{0}$-open subset.
Finally, we show that the aforementioned group equipped with the Hofer distance admits an isometric embedding of an infinite dimensional flat space for suitable values of parameters $c$ and $a$. 
\end{abstract}

\tableofcontents
\pagenumbering{arabic}

\section{Introduction}
\label{introduction section}

A symplectic manifold is an even dimensional smooth manifold that admits a closed non-degenerate $2$-form which is called a symplectic form. A Hamiltonian diffeomorphism of a symplectic manifold $(M,\omega)$ is a diffeomorphism that is the time-one map of the flow of a time dependent vector field $X_{H}$ where $H$ is a smooth compactly supported time-dependent Hamiltonian function on $M$. The vector field $X_{H}$ is determined by the Hamiltonian $H$ by the equation $X_{H} \lrcorner\ \omega = -dH$. \par
If $(M,\omega)$ is a compact symplectic manifold, then $\mathrm{Ham}(M,\omega)$ is the set of all Hamiltonian diffeomorphisms of $M$ that are compactly supported in the interior of $M$. The set $\mathrm{Ham}(M,\omega)$ is a normal subgroup of $\mathrm{Symp}(M,\omega)$ where $\mathrm{Symp}(M,\omega)$ is the set of all diffeomorphisms of $M$ that preserve the symplectic structure $\omega$. \par
This remarkable group (of Hamiltonian diffeomorphisms) has been extensively studied from different points of view, in particular, its geometry and algebraic structure. Other than the natural topologies that one can imagine on this group, such as $C^{0}, C^{1}, C^{\infty}$-topologies, one could also consider natural topologies coming from Finsler structures. The tangent space of $\mathrm{Ham}(M,\omega)$ at identity is $\mathcal{A} := C_{c}^{\infty}(M)$ and it is $C^{\infty}_{0}(M)$ (the space of mean zero smooth functions) when $M$ is a closed manifold. A norm on it defines a Finsler structure on $\mathrm{Ham}(M,\omega)$ and consequently a pseudo-distance between the points of the group. It was proved by Eliashberg-Polterovich \cite{EP93} that the norm $L_{p}$ for all finite $p\geq 1$ defined by
\[
\norm{H}_{L_{p}}:= (\int_{M}|H|^{p}\omega^{n})^{\frac{1}{p}}
\]
defines a degenerate, indeed the zero, pseudo-distance for all finite $p$. But it turns out that the norm $L_{\infty}$ defined by,
\[
\norm{H}_{L_{\infty}}:= \max H - \min H
\]
will result to a non-degenerate pseudo-distance. This highly non-trivial fact was first proved by Hofer \cite{HH90} for $M = \mathbb{R}^{2n}$, see also an alternative proof by Viterbo \cite{Vit92}, then it was extended by Polterovich \cite{LP93} to a wide class of symplectic manifolds with a nice behaviour at infinity, in particular for all closed symplectic manifolds with $[\omega]\in H^{2}(M,\mathbb{Q})$, and finally Lalonde-McDuff \cite{LM95} proved it in full generality using the theory of pseudo-holomorphic curves of Gromov. This metric is called the Hofer metric. 

\subsection{Main theorem}
The interaction of the two topologies, the $C^{0}$-topology and the Hofer topology induced by the Hofer metric, on the group $\mathrm{Ham}(M,\omega)$ is very subtle and has become interesting due to its applications. Recently, Cristofaro Gardiner, Humili\'ere and Seyfaddini \cite{GHS20} presented the first proof of the simplicity conjecture \cite[Problem 42]{MS2} using the PFH spectral invariants. Along the proof, they prove a key lemma which shows an interesting interaction of the $C^{0}$-topology and the Hofer topology.\par
The lemma states that, for a given $\epsilon > 0$ and a disk $B \subset S^{2}$, any $C^{0}$-small enough Hamiltonian diffeomorphism of $S^{2}$ supported in the upper hemisphere  is $\epsilon$-close to a Hamiltonian diffeomorphism supported in $B$ with respect to the Hofer metric, see \cite[Lemma 4.6]{GHS20}. The proof of the lemma, boils down to the symplectic extension and fragmentation results of Entov-Polterovich-Py \cite[Section 6]{EPP12} in dimension $2$. The extension lemmas are very technical and specific to dimension $2$. In \S\ref{extension section} we will use the theory of pseudo-holomorphic curves of Gromov to prove an analogous $4$-dimensional Hamiltonian extension lemma. In \S\ref{fragmentation section}, we prove a fragmentation lemma for the $4$-dimensional symplectic manifold $\mathbb{D}^{2}\times \mathbb{D}^{2}$. These lemmas will be used to prove the following Hofer approximation result for $\mathbb{D}^{2}\times \mathbb{D}^{2}$ in \S\ref{hofer approximation section}: In the following, by $\mathrm{Ham}_{Y}(X)$ we mean, the group of Hamiltonian diffeomorphisms of $X$ compactly supported in the interior of $Y$.

\begin{theorem}[Main theorem]
\label{main theorem intro}
Let $X := S^{2}(a) \times S^{2}(b)$ and $M := \mathbb{D}^{2}(\frac{a}{2}) \times \mathbb{D}^{2}(\frac{b}{2})$. Let $B$ be a topological-disk in $S^{2}(b) \backslash \{pt\}$. Then, for every $\epsilon > 0$ there exists $\delta > 0$ so that the following holds; for every $g\in \mathrm{Ham}_{M}(X)$ satisfying $d_{C^{0}}(g,id) < \delta$ there exist $\psi \in \mathrm{Ham}_{S^{2}(a) \times B}(X)$ satisfying
\[
d_{H}(g, \psi) < \epsilon.
\]
\end{theorem}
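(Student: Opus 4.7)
Following the blueprint of the two-dimensional argument in \cite[Lemma 4.6]{GHS20}, my plan is to combine the Hamiltonian extension result of \S\ref{extension section} with the fragmentation lemma of \S\ref{fragmentation section}. First, fix $\epsilon > 0$. Using the extension lemma from \S\ref{extension section}, I would produce a finite open cover $U_1, \ldots, U_N$ of $M$ together with ambient Hamiltonian diffeomorphisms $\theta_1, \ldots, \theta_N$ of $X$ such that $\theta_i(U_i) \subset S^{2}(a) \times B$ for every $i$. Constructing these displacing $\theta_i$ with controlled support is precisely the role of the $4$-dimensional extension lemma, whose proof proceeds via pseudo-holomorphic curve techniques and is what distinguishes the present argument from the $2$-dimensional case.

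Next, given any $g \in \mathrm{Ham}_{M}(X)$ with $d_{C^{0}}(g, id) < \delta$, the fragmentation lemma of \S\ref{fragmentation section} yields, for $\delta$ sufficiently small in terms of $\epsilon$ and the cover, a factorisation $g = \varphi_1 \varphi_2 \cdots \varphi_N$ with $\varphi_i \in \mathrm{Ham}_{U_i}(X)$ and
\[
\sum_{i=1}^{N} \norm{\varphi_i}_{H} < \frac{\epsilon}{2}.
\]
The essential quantitative content is that $C^{0}$-smallness of $g$ is upgraded to Hofer-smallness of each factor, and this is where the extension lemma feeds back into the fragmentation.

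Setting $\tilde\varphi_i := \theta_i \varphi_i \theta_i^{-1}$ and $\psi := \tilde\varphi_1 \cdots \tilde\varphi_N$, we have $\psi \in \mathrm{Ham}_{S^{2}(a) \times B}(X)$ because $\tilde\varphi_i$ is supported in $\theta_i(U_i) \subset S^{2}(a) \times B$. A telescoping argument together with the bi-invariance of the Hofer metric gives
\[
d_{H}(g, \psi) \leq \sum_{i=1}^{N} d_{H}(\varphi_i, \tilde\varphi_i),
\]
and each term is controlled via conjugation invariance by
\[
d_{H}(\varphi_i, \tilde\varphi_i) = \norm{\varphi_i^{-1} \theta_i \varphi_i \theta_i^{-1}}_{H} \leq \norm{\varphi_i^{-1}}_{H} + \norm{\theta_i \varphi_i \theta_i^{-1}}_{H} = 2\norm{\varphi_i}_{H}.
\]
Summing yields $d_{H}(g, \psi) < \epsilon$, as desired. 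Note that the (possibly large) Hofer norms of the $\theta_i$ do not appear in the final estimate, which is the whole point of the conjugation trick.

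The main obstacle is the quantitative fragmentation step: producing a decomposition of a $C^{0}$-small $g \in \mathrm{Ham}_{M}(X)$ with uniform control on the Hofer norms of its pieces, rather than merely on their supports, is the technical heart of the argument. Matching the cover $\{U_i\}$ to a family of displacing $\theta_i$ delivered by the pseudo-holomorphic curve arguments of \S\ref{extension section}, and then running the fragmentation so that the combined output enjoys the bound $\sum \norm{\varphi_i}_{H} < \epsilon/2$, is where I expect the bulk of the work to lie.
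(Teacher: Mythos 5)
Your overall strategy (fragment, conjugate each piece into $S^{2}(a)\times B$, and telescope) is the same as the paper's, but the quantitative mechanism you invoke is not the one that actually works, and the gap is fundamental.

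You claim that the fragmentation lemma of \S\ref{fragmentation section} delivers factors $\varphi_{i}$ with $\sum_{i}\norm{\varphi_{i}}_{H}<\epsilon/2$. It does not, and it cannot: $C^{0}$-smallness of $g$ gives no control on the Hofer norms of the fragments. The lemma only controls their \emph{supports}. Consequently your estimate $d_{H}(\varphi_{i},\theta_{i}\varphi_{i}\theta_{i}^{-1})\leq 2\norm{\varphi_{i}}_{H}$, while a valid inequality, is useless here. The inequality that is actually useful splits the other way: by conjugation invariance
\[
d_{H}(\varphi_{i},\theta_{i}\varphi_{i}\theta_{i}^{-1})
=\norm{\varphi_{i}^{-1}\theta_{i}\varphi_{i}\theta_{i}^{-1}}_{H}
\leq \norm{\varphi_{i}^{-1}\theta_{i}\varphi_{i}}_{H}+\norm{\theta_{i}^{-1}}_{H}
=2\norm{\theta_{i}}_{H},
\]
so the final error is controlled by the Hofer norms of the \emph{displacing} maps, not of the fragments. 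Your closing remark that ``the (possibly large) Hofer norms of the $\theta_{i}$ do not appear'' is exactly backwards: it is the possibly large Hofer norms of the $\varphi_{i}$ that drop out; the $\theta_{i}$ appear, and one must arrange for them to be Hofer-small, which is possible because displacing a small-area region costs little Hofer energy (Lemma \ref{displacement lemma}).

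This also explains a second, numerical gap in your scheme. If you only fragment into $N$ pieces whose supports match a fixed cover, each displacing $\theta_{i}$ has Hofer norm on the order of $\text{area}(U_{i})\sim 1/N$, and $N$ such terms sum to $O(1)$, not $\epsilon$. The paper therefore fragments much more finely into $m\gg N$ strips, groups the $m$ commuting pieces into $N$ unions $V_{j}$ of total area $\tfrac{1}{2N}<\text{area}(B)$ so that a single $h_{j}$ can push each union into $S^{2}(a)\times B$, but each strip in the union has area $\tfrac{1}{2m}$ so the $h_{j}$ built from Lemma \ref{displacement lemma} satisfies $\norm{h_{j}}_{H}\leq \tfrac{1}{m}$. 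The total error is then $\frac{2(N+1)}{m}$, which is made small by choosing $m$ large relative to $N$. This two-scale fragment-and-regroup device is the technical heart of the theorem and is missing from your proposal.
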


\subsection{Outline of the proof}
To prove our Hamiltonian extension lemma for $M = \mathbb{D}^{2}(\frac{a}{2}) \times \mathbb{D}^{2}(\frac{b}{2})$ we use pseudo-holomorphic theory of Gromov. Namely, consider $X = S^{2}(a) \times S^{2}(b)$ and $M$ as a subdomain in $X$, where disks are identified with the upper hemisphere of the corresponding spheres. Let $D_{1}\subset D_{2} \subset D_{3} \subset \mathbb{D}^{2}(\frac{b}{2})$ be the horizontal strips in $\mathbb{D}^{2}(\frac{b}{2})$ as in Lemma \ref{extension lemma 2}. Let $g \in \mathrm{Ham}_{M}(X)$ be a Hamiltonian diffeomorphism that is $C^{0}$-close enough to identity. Then, we would like to find a Hamiltonian diffeomorphism $\psi$ such that coincides with $g$ on $\mathbb{D}^{2}(\frac{a}{2}) \times D_{1}$ and it is supported in $\mathbb{D}^{2}(\frac{a}{2}) \times D_{3}$. Restrict $g$ to $S^{2}(a) \times D(-\epsilon_{2}, 1)$, see \S\ref{extension section}, and extend it by identity to $S^{-}_{\epsilon}(\frac{a}{2}) \times S^{2}(b) \cup S^{2}(a) \times \mathcal{U}^{u}$ where $S^{-}_{\epsilon}(\frac{a}{2})$ is a small enough neighborhood of the lower hemisphere and $\mathcal{U}^{u}$ is a carefully chosen large disk in $S^{2}(b)$, see Figure \ref{extension domains figure}. Then, use Theorem \ref{jhol theorem 2} to extend it to an element $\psi_{u}$ of $\mathrm{Ham}(S^{2}(a) \times S^{2}(b))$. Construct another extension $\psi_{d}$ of the restriction of $g$ to $S^{2}(a) \times D(-1,\epsilon_{2})$, see \S\ref{extension section}, where this time we consider another open subset $\mathcal{U}^{d}$, see Figure \ref{extension domains figure}. Then, the diffeomorphism defined by $\psi:= \psi_{u}\circ \psi_{d}\circ g^{-1}$ restricted to $\mathbb{D}^{2}(\frac{a}{2}) \times \mathbb{D}^{2}(\frac{b}{2})$ will have the desired properties.\par
Let us now sketch the proof of our fragmentation result. Divide $\mathbb{D}^{2}(\frac{b}{2})$ into $N$ horizontal strips. Consider the covering of $\mathbb{D}^{2}(\frac{a}{2}) \times \mathbb{D}^{2}(\frac{b}{2})$ by $\mathbb{D}^{2}(\frac{a}{2}) \times D_{i}$, $i = 1,\dots, N$. Around each intersection line $D_{i} \cap D_{i+1}$, consider very thin horizontal strips $D_{i,1} \subset D_{i,2} \subset D_{i,3}$ and execute the extension lemma on each of these sets of strips and this will fragment $g$ into $g_{1}\circ \dots \circ g_{N}\circ \theta$ where $g_{i}$ is supported in $\mathbb{D}^{2}(\frac{a}{2}) \times D_{i}$ for $i = 1,\dots, N$ and $\theta$ is supported in $\mathbb{D}^{2}(\frac{a}{2}) \times V$ where $V$ is a disjoint union of arbitrary small disks.\par

To prove the main theorem, Theorem \ref{main theorem intro}, first consider large enough integers $k,N > 0$ and the covering $\mathbb{D}^{2}(\frac{a}{2}) \times \underset{i=1}{\overset{kN}\sqcup} D_{i}$ consist of small stabilized horizontal strips. Choose $\delta > 0$ small enough so that any $g\in B_{C^{0}}(id, \delta)$ can be fragmented into $g_{1}\circ \dots\circ g_{kN}\circ \theta$ where $g_{i}$ is supported in $\mathbb{D}^{2}(\frac{a}{2}) \times D_{i}$ and $\theta$ is supported in a disjoint union of stabilized disks with small enough total area. Since the supports of $g_{i}$'s are disjoint, they commute. We now partition them into $N$ groups of cardinality $k$ and denote the composition of the elements of the $i$th group by $f_{i}$. Hence, we have $g = f_{1}\circ \dots \circ f_{N}\circ \theta$. One can find Hamiltonian diffeomorphisms $h_{i}$, $i = 1,\dots, N$ and $h_{\theta}$ of $S^{2}(a) \times S^{2}(b)$ with small Hofer norm that map the support of $f_{i}$, $i=1,\dots,N$ and $\theta$ into $S^{2}(a) \times B$ respectively. Then, the Hamiltonian diffeomorphism $\psi:= \underset{i=1}{\overset{N}\Pi}h_{i}f_{i}h_{i}^{-1}\circ h_{\theta}\theta h_{\theta}^{-1}$ will be Hofer close to $g$ and supported in $S^{2}(a) \times B$.

\subsection{\texorpdfstring{$C^{0}$-continuity of Lagrangian spectral estimators}{Lg}}
Recently, Polterovich and Shelukhin \cite{PS23} showed that a certain family of Lagrangian tori in $M_{a}:= S^{2}(1)\times S^{2}(a)$ is non-displaceable, where $0 < a < 1$. Associated to this non-displaceable family of Lagrangian tori they, in particular, constructed a new functional on the space of time-dependent Hamiltonian functions on $M_{a}$ ($a$ rational) called Lagrangian spectral estimators. These functionals satisfy a long list of remarkable properties, see Theorem \ref{spectral estimators theorem}. Using these spectral estimators they proved many interesting results including the existence of an infinite dimensional flat space in the group $\mathrm{Ham}(S^{2})$ and presented an alternative proof of the Simplicity conjecture. The value of these spectral estimators only depend on the homotopy class of the flow of a mean-zero Hamiltonian. Therefore, they define a functional on the universal cover of $\mathrm{Ham}(S^{2}(1) \times S^{2}(a))$. It is a theorem of Gromov that the group $\mathrm{Ham}_{c}(\mathbb{D}^{2}(\frac{1}{2}) \times \mathbb{D}^{2}(\frac{a}{2}))$ is weakly contractible and in particular it has trivial fundamental group. Hence, restricting the spectral estimators to the subspace $\mathrm{Ham}_{c}(\mathbb{D}^{2}(\frac{1}{2}) \times \mathbb{D}^{2}(\frac{a}{2}))$ we derive a well-defined functional on the Hamiltonian group $\mathrm{Ham}_{c}(\mathbb{D}^{2}(\frac{1}{2}) \times \mathbb{D}^{2}(\frac{a}{2}))$ satisfying many remarkable properties.\par
These functionals denoted by $c_{k,B}$ where $k$ is a positive integer and $B > 0$ is a positive rational number, will not be $C^{0}$-continuous. However, as an application of our main theorem, we show in \S\ref{C^{0}-continuity section} that their difference $\tau_{k,k',B,B'}:= c_{k,B} - c_{k',B'}$ for any two different data $k,B$ and $k',B'$ is a uniformly $C^{0}$-continuous functional.

\begin{theorem}[\texorpdfstring{$C^{0}$}{Lg}-continuity]
\label{C^{0}-continuity of tau intro}
The functional $\tau_{k,k',B,B'}: \mathrm{Ham}_{c}(\mathbb{D}^{2}(\frac{1}{2})\times \mathbb{D}^{2}(\frac{a}{2})) \rightarrow \mathbb{R}$ is a uniformly $C^{0}$-continuous functional, for small enough rational number $0 < a < 1$.
\end{theorem}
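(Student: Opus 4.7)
The plan is to combine the Hofer approximation of Theorem \ref{main theorem intro} with the Hofer--Lipschitz and locality properties of the Polterovich--Shelukhin estimators $c_{k,B}$ (collected in Theorem \ref{spectral estimators theorem}) in order to upgrade $C^{0}$-proximity of $f,g$ to Hofer-proximity between $f$ and $\theta g$, where $\theta$ is supported in a small product region on which the difference $c_{k,B} - c_{k',B'}$ is insensitive. Throughout, write $X := S^{2}(1) \times S^{2}(a)$ and $M := \mathbb{D}^{2}(\tfrac{1}{2}) \times \mathbb{D}^{2}(\tfrac{a}{2})$, view every $\phi \in \mathrm{Ham}_{c}(M)$ as an element of $\mathrm{Ham}_{M}(X)$ by extending by the identity, and set $\tau := \tau_{k,k',B,B'}$ for brevity.

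The two properties of $c_{k,B}$ I intend to use are: (a) Hofer--Lipschitz, $|c_{k,B}(\phi) - c_{k,B}(\psi)| \leq d_{H}(\phi, \psi)$, which gives $|\tau(\phi) - \tau(\psi)| \leq 2\, d_{H}(\phi, \psi)$; and (b) a locality/support property asserting that there is a topological disk $B_{0} \subset S^{2}(a) \setminus \{pt\}$ of sufficiently small area (relative to $(k,B)$ and $(k',B')$) such that for every $\theta \in \mathrm{Ham}(X)$ supported in $S^{2}(1) \times B_{0}$ and every $\eta \in \mathrm{Ham}(X)$, the difference $c_{k,B}(\theta \eta) - c_{k,B}(\eta) = \lambda(\theta)$ depends only on $\theta$, not on the pair $(k,B)$. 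Property (b) immediately yields $\tau(\theta \eta) = \tau(\eta)$.

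Given $\epsilon > 0$, apply Theorem \ref{main theorem intro} with $B = B_{0}$ as in (b) and Hofer tolerance $\epsilon / 2$ to produce $\delta > 0$ such that every $h \in \mathrm{Ham}_{M}(X)$ with $d_{C^{0}}(h, \mathrm{id}) < \delta$ admits $\theta \in \mathrm{Ham}_{S^{2}(1) \times B_{0}}(X)$ with $d_{H}(h, \theta) < \epsilon/2$. Now fix $f, g \in \mathrm{Ham}_{c}(M)$ with $d_{C^{0}}(f, g) < \delta$. Right-invariance of $d_{C^{0}}$ gives $d_{C^{0}}(fg^{-1}, \mathrm{id}) < \delta$, so the above produces $\theta$ supported in $S^{2}(1) \times B_{0}$ with $d_{H}(fg^{-1}, \theta) < \epsilon/2$, and bi-invariance of the Hofer metric upgrades this to $d_{H}(f, \theta g) < \epsilon/2$. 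Combining (a) and (b),
\[
|\tau(f) - \tau(g)| \leq |\tau(f) - \tau(\theta g)| + |\tau(\theta g) - \tau(g)| \leq 2 \cdot \tfrac{\epsilon}{2} + 0 = \epsilon,
\]
and $\delta$ is independent of $f$ and $g$, yielding uniform $C^{0}$-continuity.

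The hard part will be verifying (b), i.e.\ extracting from \cite{PS23} that $c_{k,B}(\theta \eta) - c_{k,B}(\eta)$ is independent of $(k,B)$ whenever $\theta$ is supported in $S^{2}(1) \times B_{0}$ for $B_{0}$ a small enough disk. The expected mechanism is an action-window / continuation argument: Hamiltonians supported in a region of area below the threshold determined by the parameters $B, B'$ act trivially on the Lagrangian Floer cycles used to define $c_{k,B}$, so the shift they induce on the estimator is universal in $(k,B)$. Quantifying the dependence of $\mathrm{area}(B_{0})$ on $k, k', B, B'$ is the technical crux of the whole argument; once it is in hand, the main theorem of this paper supplies the geometric input that turns a Hofer-Lipschitz bound into a $C^{0}$-Lipschitz bound for the difference $\tau$.
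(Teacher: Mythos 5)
Your high-level architecture matches the paper's: Hofer-approximate a $C^{0}$-small perturbation by a diffeomorphism supported in a well-chosen region, then use a Hofer--Lipschitz bound for the two estimators and an invariance of $\tau$ under such perturbations. However, there is a genuine gap that makes the proposal fall short: the approximating region is placed in the wrong factor, and the invariance property (your (b)) is left as a conjecture precisely because, with your choice, it does not follow from the known properties of $c_{k,B}$.

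The paper applies Theorem~\ref{main theorem} \emph{with the two factors swapped}: it selects an open topological disk $B\subset\mathbb{D}^{2}(\tfrac{1}{2})$ in the \emph{first} factor $S^{2}(1)$, chosen to be \emph{disjoint from the circles} $l^{0}_{k,B}$ and $l^{0}_{k',B'}$, and works with $U = B\times\mathbb{D}^{2}(\tfrac{a}{2})$. For such $U$ the invariance $\tau(\theta\phi)=\tau(\theta)$ (Lemma~\ref{invariance under perturbation}) is an immediate consequence of the Controlled-Additivity and Calabi properties of $c_{k,B}$: the Calabi contribution $-\tfrac{1}{\mathrm{vol}}\,\mathrm{Cal}(\widetilde{\phi})$ is the same for $c_{k,B}$ and $c_{k',B'}$, hence cancels in the difference $\tau$. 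Your proposal instead takes a disk $B_{0}$ in the \emph{second} factor $S^{2}(a)$ and a perturbation supported in $S^{2}(1)\times B_{0}$. The first-factor projection of that region is all of $S^{2}(1)$, so it meets every circle $l^{0,j}_{k,B}$ and $l^{0,j}_{k',B'}$. This is exactly the configuration for which the Controlled-Additivity/Calabi mechanism cannot be invoked, and it is why the paper explicitly insists that $B$ avoid the circles. (A further sanity check: if set-theoretic disjointness of $U$ from $L_{k,B}$ alone sufficed, one could take $U$ to be the interior of $N_{a}$ itself, concluding $\tau\equiv 0$ and contradicting Theorems~\ref{c^{0}-open set in complement of hofer ball theorem} and~\ref{big flat in Ham theorem}. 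So the circles do matter, and they live in the $S^{2}(1)$ factor.)

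Your heuristic for why (b) should hold is also off-target. You imagine a quantitative ``action-window'' statement in which small area of $B_{0}$ relative to $(k,B,k',B')$ makes the shift independent of the parameters. The actual mechanism is not quantitative in the area of the disk at all; it is topological: the diffeomorphism must be supported away from the Lagrangian link $L_{k,B}\cup L_{k',B'}$ in the sense governed by the circles in $S^{2}(1)$, and then the estimators both reduce to a Calabi term that cancels. To fix the proposal, you should (i) place the topological disk in the $S^{2}(1)$ factor, disjoint from $l^{0}_{k,B}\cup l^{0}_{k',B'}$, and invoke the main theorem with the factors exchanged; and (ii) replace your conjectural (b) by the paper's Lemma~\ref{invariance under perturbation}, proved from Controlled Additivity and the Calabi property. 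Your use of bi-invariance to transfer $d_{H}(fg^{-1},\theta)<\epsilon/2$ into $d_{H}(f,\theta g)<\epsilon/2$ and then a triangle inequality is fine, and is essentially what the paper does after rewriting the estimate around a base point~$\theta$.
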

\noindent
The area $\frac{1}{2}$ for the disk in the first factor is irrelevant and could be any number in $(0,1]$. 

\subsection{Applications}
In the last two sections of the paper we show some applications of Theorem \ref{C^{0}-continuity of tau intro}. In \S\ref{C^{0}-open in complement of hofer ball} we show an application to an interesting question initially posed by Le Roux \cite{LR10} which concerns the interaction of the $C^{0}$-topology and Hofer topology:
\begin{question}
\label{le roux question}
Let $(M,\omega)$ be a symplectic manifold and let $\mathrm{Ham}(M,\omega)$ be the group of compactly supported Hamiltonian diffeomorphisms of $M$. Let $A > 0$ be a fixed positive number and $d_{H}$ be the Hofer metric, see Definition \ref{hofer norm}. Define the following subset of $\mathrm{Ham}(M,\omega)$,
\[
E_{A}(M,\omega):= \{\phi \in \mathrm{Ham}(M,\omega) : d_{H}(\phi, id) > A\}.
\]
Does the set $E_{A}(M,\omega)$ have a non-empty $C^{0}$ interior $?$
\end{question}

For symplectically aspherical manifolds with infinite spectral diameter, Buhovski, Humili\'ere and Seyfaddini \cite{BHS21} proved that the set $E_{A}(M,\omega)$ contain a non-empty $C^{0}$-interior. In \cite[Theorem 8]{YK22} Y. Kawamoto in particular constructed a $C^{0}$-continuous homogenuous quasimorphism on the Hamiltonian group of $S^{2}(1)\times S^{2}(1)$ and used them to give a positive answer to the above question in this case. For the product symplectic manifold $(M\times M, \omega \oplus -\omega)$ where $(M,\omega)$ is a closed symplectically aspherical manifold Mailhot \cite{PA22} positively answered Le Roux's question. \par
In \S\ref{C^{0}-open in complement of hofer ball} we give a positive answer to Le Roux's question for the $4$-dimensional symplectic manifold $\mathbb{D}^{2}(c) \times \mathbb{D}^{2}(\frac{a}{2})$ where $0 < a < 1$ is any rational number and $0 < c < 1$ is any positive number. As another application of our results, following \cite{PS23}, we will prove in \S\ref{big flat in Ham} that an infinite dimensional flat space isometrically embeds into the group of compactly supported Hamiltonian diffeomorphisms of $\mathbb{D}^{2}(c) \times \mathbb{D}^{2}(\frac{a}{2})$.

\begin{theorem}
\label{big flat in ham intro}
\sloppy The space $C^{\infty}_{c}(0,b)$ isometrically embeds into $\mathrm{Ham}_{c}(\mathbb{D}^{2}(b') \times \mathbb{D}^{2}(a'))$ where $0 < a' < 1$ is any number that satisfies $b < \frac{1}{6}(1 - a')$ and $b'$ is any positive number satisfying $\frac{1}{2} + b < b' < 1$. Here,  we consider the $C^{0}$-distance on the source and the Hofer distance on the target.
\end{theorem}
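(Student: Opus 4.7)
The plan is to adapt the isometric flat construction of Polterovich and Shelukhin \cite{PS23} inside $\mathrm{Ham}(S^{2}(1) \times S^{2}(a'))$ to our bidisk setting, with Theorem \ref{C^{0}-continuity of tau intro} as the essential new input: even though the individual Lagrangian spectral estimators $c_{k,B}$ are not $C^{0}$-continuous, their differences $\tau_{k,k',B,B'}$ are $C^{0}$-continuous on $\mathrm{Ham}_{c}(\mathbb{D}^{2}(b') \times \mathbb{D}^{2}(a'))$ while remaining Hofer-Lipschitz, which is exactly what will power the lower bound on Hofer distance.

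For each $f \in C^{\infty}_{c}(0,b)$ I define $\Phi(f) \in \mathrm{Ham}_{c}(\mathbb{D}^{2}(b') \times \mathbb{D}^{2}(a'))$ by repeating verbatim the autonomous Hamiltonian construction of \cite{PS23}. The numerical constraints $b < \tfrac{1}{6}(1-a')$ and $\tfrac{1}{2} + b < b' < 1$ play a double role: they guarantee that the resulting Hamiltonian is compactly supported in the interior of $\mathbb{D}^{2}(b') \times \mathbb{D}^{2}(a')$, so $\Phi$ indeed lands in the asserted group, and, when that Hamiltonian is regarded via the extension-by-identity inclusion into $\mathrm{Ham}(S^{2}(1) \times S^{2}(a'))$, its support sits inside the region where the Polterovich--Shelukhin spectral estimators take the prescribed values. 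The upper Hofer bound $d_{H}(\Phi(f), \Phi(g)) \le \|f-g\|_{C^{0}}$ is then a direct byproduct of the Hofer estimate on the explicit generating Hamiltonian of $\Phi(f) \circ \Phi(g)^{-1}$, transplanted unchanged from \cite{PS23}.

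For the matching lower bound, the PS23 computation gives $c_{k,B}(\Phi(f)) = f(s_{k,B}) + \mathrm{const}$ at specific action values $s_{k,B}$, and as $(k,B)$ range over admissible parameters these points become dense in $(0,b)$ and crucially include points outside $\overline{\mathrm{supp}\, f} \cup \overline{\mathrm{supp}\, g}$. Gromov's weak contractibility of $\mathrm{Ham}_{c}(\mathbb{D}^{2}(b') \times \mathbb{D}^{2}(a'))$ ensures each $c_{k,B}$ descends to a well-defined functional on this group, and Theorem \ref{C^{0}-continuity of tau intro} upgrades each difference $\tau_{k,k',B,B'}$ to a $C^{0}$-continuous and $d_{H}$-Lipschitz functional there. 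Combining these ingredients,
\[
d_{H}(\Phi(f), \Phi(g)) \ge \sup_{k,k',B,B'} \bigl| \tau_{k,k',B,B'}(\Phi(f) \circ \Phi(g)^{-1}) \bigr| = \sup_{s,s'} \bigl|(f-g)(s) - (f-g)(s')\bigr| = \|f - g\|_{C^{0}},
\]
the last equality obtained by choosing $s'$ outside the common support of $f$ and $g$. The main obstacle I anticipate is the faithful transcription of the PS23 spectral-estimator computation into the bidisk picture: one must verify that under the parameter range $b < \tfrac{1}{6}(1-a')$, $\tfrac{1}{2} + b < b' < 1$ the inclusion $\mathbb{D}^{2}(b') \times \mathbb{D}^{2}(a') \hookrightarrow S^{2}(1) \times S^{2}(a')$ positions the supports correctly relative to the Polterovich--Shelukhin non-displaceable Lagrangian tori, so that the spectral values are genuinely $f(s_{k,B})$ and the admissible $s_{k,B}$ are dense in $(0,b)$.
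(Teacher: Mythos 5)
The framing misidentifies the mechanism. You present Theorem \ref{C^{0}-continuity of tau intro} as ``the essential new input'' that ``will power the lower bound on Hofer distance,'' but $C^{0}$-continuity plays no role in the Hofer lower bound whatsoever. The paper's proof of Theorem \ref{big flat in Ham theorem} never invokes Theorem \ref{C^{0}-continuity of tau} (or anything involving $\tau$); it works directly with the individual $c_{2,B_{i}}$ on the universal cover, using only their Hofer--Lipschitz and Lagrangian-control properties from Theorem \ref{spectral estimators theorem}, then descends via Gromov's weak contractibility of $\mathrm{Ham}_{c}(\mathbb{D}^{2}(b')\times\mathbb{D}^{2}(a'))$. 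The fact that each individual $c_{k,B}$ fails to be $C^{0}$-continuous is harmless here because the lower bound is a Hofer estimate, not a $C^{0}$ estimate. What you actually want from the spectral estimator is Lipschitz control in $\|\cdot\|_{h}$, and each $c_{k,B}$ already provides that with constant $1$.

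Your substitution of $\tau_{k,k',B,B'}$ for $c_{k,B}$ also breaks the quantitative sharpness. The $c_{k,B}$ are $1$-Lipschitz in $\|\cdot\|_{h}$, so $\tau = c_{k,B}-c_{k',B'}$ is only $2$-Lipschitz, giving $d_{H}(\Phi(f),\Phi(g)) \geq \tfrac{1}{2}\,|\tau_{k,k',B,B'}(\Phi(f)\Phi(g)^{-1})|$ rather than the inequality you wrote without the factor $\tfrac{1}{2}$. Then, with your choice of $s'$ outside the support forcing $(f-g)(s')=0$, the right-hand side supremum is $\|f-g\|_{C^{0}}$, and you land at $d_{H}(\Phi(f),\Phi(g))\geq\tfrac{1}{2}\|f-g\|_{C^{0}}$, which is strictly weaker than the isometry claim. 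To recover the full bound along your route one would have to exploit that the profile $h^{\#}$ is built with sign-reversed copies of $h$ so that there exist Lagrangian tori where $c_{k',B'}(\Phi(h)) = -h(x)$, making the oscillation of the spectral values equal to $2\|h\|_{C^{0}}$; you do not do this, and it is more natural and simpler to avoid $\tau$ entirely: take the sequence $B_{i}=\tfrac{1}{2}-x_{i}$ with $x_{i}\uparrow x_{0}$ a maximizer of $|h|$, use Lagrangian control to get $c_{2,B_{i}}(\widetilde{\Psi}(h)) = h(x_{i})$, and use the $1$-Lipschitz bound $|c_{2,B_{i}}(\widetilde{\Psi}(h))|\leq\|\widetilde{\Psi}(h)\|_{h}$ to conclude $\|\widetilde{\Psi}(h)\|_{h}\geq\|h\|_{C^{0}}$ directly.

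A smaller point: you record the spectral value as $f(s_{k,B})+\mathrm{const}$. There is no additive constant here; Lagrangian control together with the symmetric construction of $h^{\#}$ gives exactly $c_{2,B}(\Gamma(h)) = h(\tfrac{1}{2}-B)$ for levels in $(0,b)$.
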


\vspace{0.75cm}

\begin{center}
\textbf{Acknowledgement}
\end{center}

This research is part of my PhD program at the Université de Montr\'eal under the supervision of Egor Shelukhin. I would like to thank him for proposing the project and guiding me through it. I also thank him for many useful discussions and pointing out the possible applications of our main theorem. I am grateful to Marcelo S. Atallah, Filip Bro\'ci\'c, Dylan Cant for helpful discussions, and Pierre-Alexander Mailhot for creating the pictures. This research was partially supported by Fondation Courtois.

\section{Theory of pseudo-holomorphic curves}
In this section we assert a variant of a theorem from \cite{jhol} which we use in \S\ref{extension section}, see \cite[Theorem 9.4.7]{jhol} for the original version. 
\begin{theorem}
\label{jhol theorem 2}
Let $(M,\omega)$ be a compact connected symplectic $4$-manifold that does not contain any symplectically embedded $2$-sphere with self-intersection number $-1$. Let $A,B \in H_{2}(M,\mathbb{Z})$ be two integer homology classes that are represented by symplectically embedded $2$-spheres and satisfy the following:
$$A.B = 1, \ A.A = 0, \ B.B = 0.$$
Let $\sigma\in \Omega^{2}(S^{2})$ be an area form with $\int_{S^{2}}\sigma = 1$. Then the following holds:
\begin{enumerate}
\item   There is a diffeomorphism $\psi: S^{2}\times S^{2}\rightarrow M$ such that, 
$$\psi^{*}\omega = a\pi_{1}^{*}\sigma + b\pi_{2}^{*}\sigma,\ \ a = \int_{A}\omega,\ b = \int_{B}\omega$$

\item If $U, V\subset S^{2}$ are two open disks and $\iota: U \times S^{2} \cup S^{2}\times V \rightarrow M$ is an embedding such that: 
$$\iota^{*}\omega = a\pi_{1}^{*}\sigma + b\pi_{2}^{*}\sigma,\ \ a = \int_{A}\omega,\ b = \int_{B}\omega$$
$$\iota_{*}([S^{2}\times \{w\}]) = A, \ \ \iota_{*}([\{z\}\times S^{2}])= B$$
for all $z\in U, w \in V$, then for any closed subsets $D \subset U$ and $C \subset V$ the diffeomorphism $\psi$ in $(1)$ can be chosen to agree with $\iota$ on $D \times S^{2} \cup S^{2}\times C$.
\end{enumerate}
\end{theorem}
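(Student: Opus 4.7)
The plan is to construct two transverse foliations of $M$ by embedded $J$-holomorphic spheres in classes $A$ and $B$, and use the product structure they induce to identify $M$ with $S^{2}\times S^{2}$. For part $(2)$, I would arrange the almost complex structure so that the spheres prescribed by $\iota$ become leaves of these foliations. This is a relative version of the argument of McDuff--Salamon \cite[Theorem 9.4.7]{jhol}.

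First, choose an $\omega$-compatible almost complex structure $J$ on $M$ such that the two given symplectically embedded spheres are $J$-holomorphic and, for part $(2)$, $J$ additionally coincides with $\iota_{*}J_{0}$ on a neighbourhood of $\iota(D\times S^{2}\cup S^{2}\times C)$, where $J_{0}$ is the split complex structure on $S^{2}\times S^{2}$ compatible with $a\pi_{1}^{*}\sigma+b\pi_{2}^{*}\sigma$. Any such $J$ is automatically regular on simple $J$-holomorphic spheres in classes $A$ or $B$: by the adjunction formula both classes satisfy $c_{1}=2$, so the automatic regularity theorem of Hofer--Lizan--Sikorav applies to their embedded representatives. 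Outside the prescribed region, choose $J$ generically so that all simple $J$-holomorphic curves in all relevant bubble classes are also regular. With this $J$, the slices $\iota(\{z\}\times S^{2})$ for $z\in U$ and $\iota(S^{2}\times\{w\})$ for $w\in V$ are $J$-holomorphic representatives of $B$ and $A$ respectively.

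Next, analyse the moduli spaces $\mathcal{M}(A;J)$ and $\mathcal{M}(B;J)$ of simple $J$-holomorphic spheres. By Gromov compactness, positivity of intersection, the adjunction inequality, and the no $(-1)$-sphere hypothesis, the argument of \cite[Theorem 9.4.7]{jhol} rules out bubbling, so both moduli spaces are compact $2$-manifolds diffeomorphic to $S^{2}$. Positivity of intersection combined with $A\cdot A=B\cdot B=0$ forces distinct leaves of each moduli space to be disjoint, so the $A$-curves and $B$-curves each foliate $M$. Since $A\cdot B=1$, every $A$-leaf meets every $B$-leaf transversely in exactly one point, and the map sending $p\in M$ to its pair of leaves gives a diffeomorphism $\Phi:M\to\mathcal{M}(A;J)\times\mathcal{M}(B;J)\cong S^{2}\times S^{2}$. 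Composing with a fibrewise Moser deformation on each $S^{2}$-factor yields the symplectomorphism $\psi$ of part $(1)$.

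For part $(2)$, our choice of $J$ makes $\iota(\{z\}\times S^{2})$ the unique $J$-holomorphic $B$-leaf through any of its points, and similarly $\iota(S^{2}\times\{w\})$ the unique $A$-leaf through its points. Hence, parametrizing $\mathcal{M}(B;J)\cong S^{2}$ and $\mathcal{M}(A;J)\cong S^{2}$ so that the label maps $z\mapsto [\iota(\{z\}\times S^{2})]$ and $w\mapsto [\iota(S^{2}\times\{w\})]$ extend the identity embeddings $U\hookrightarrow S^{2}$ and $V\hookrightarrow S^{2}$ (which is possible because $U,V$ are disks), the diffeomorphism $\Phi$ restricts to $\iota^{-1}$ on $\iota(D\times S^{2}\cup S^{2}\times C)$. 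Since $\iota^{*}\omega$ is already the product form on its domain, the final Moser step can be carried out relatively, fixing the closed subset $D\times S^{2}\cup S^{2}\times C$. The main obstacle I anticipate is precisely this relative normalization: one must perform the fibrewise reparametrization and Moser deformation of the foliation coordinates while keeping $\psi=\iota$ on $D\times S^{2}\cup S^{2}\times C$, which requires a careful relative version of the fibrewise identification in \cite[Theorem 9.4.7]{jhol} with the $\iota$-data as boundary conditions along the closed subsets.
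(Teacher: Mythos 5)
Your overall strategy (Gromov's foliation argument adapted from McDuff--Salamon \cite[Theorem 9.4.7]{jhol}) matches the paper's, and part $(1)$ is handled essentially as in the paper. However, the key step for part $(2)$ contains a genuine gap. You assert that, after choosing the parametrizations of $\mathcal{M}(A;J)$ and $\mathcal{M}(B;J)$ so that the label maps extend the identity on (neighborhoods of) $D$ and $C$, the resulting diffeomorphism $\Phi$ restricts to $\iota^{-1}$ on $\iota(D\times S^{2}\cup S^{2}\times C)$. This is not true. Take $p=\iota(z,w)$ with $z\in D$ and $w$ outside a neighborhood of $C$ where $J$ is no longer split. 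The $B$-leaf through $p$ is indeed $\iota(\{z\}\times S^{2})$, so the $B$-coordinate of $\Phi(p)$ is $z$. But the $A$-leaf $L_A(p)$ through $p$ is \emph{not} $\iota(S^{2}\times\{w\})$ (which is not even in the domain of $\iota$ for $w\notin V$, and not $J$-holomorphic even when it is); in the region where $J$ is split, $L_A(p)$ is the graph of a holomorphic function $f$ with $f(z)=w$, and $f$ need not be constant. Hence the $A$-coordinate of $\Phi(p)$ need not equal $w$. No choice of parametrization of the moduli space by itself fixes this, because the mismatch varies with $z\in D$.

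The paper resolves exactly this point by a further step, following McDuff--Salamon: after choosing $J$ split on the (closure of the) larger set $\iota(U\times S^{2}\cup S^{2}\times V)$ so that the Gromov map built from the two reference leaves $\iota(\{z_*\}\times S^{2})$, $\iota(S^{2}\times\{w_*\})$ agrees with $\iota$ on $U\times V$ and on those two reference spheres, one alters the map on $U\times(S^{2}\setminus V)$ and $(S^{2}\setminus U)\times V$ using degree-one maps $\rho:S^{2}\to S^{2}$ that collapse $D$ to $z_*$ (respectively $C$ to $w_*$), are the identity outside $U$ (respectively $V$), and have $\det(d\rho)\geq 0$. This collapsing trick is precisely what extends the agreement with $\iota$ from the reference spheres and $U\times V$ to all of $D\times S^{2}\cup S^{2}\times C$; it is the essential mechanism you have omitted. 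Your closing paragraph alludes to ``a careful relative version of the fibrewise identification'' as a remaining obstacle, but it does not identify the failure of the preceding claim nor the collapsing-map construction that repairs it; without it the proof of part $(2)$ does not go through as written.
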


\begin{proof}
The proof is quite similar to the proof of \cite[Theorem 9.4.7]{jhol}. We may assume that the symplectic embedding $\iota$ can be smoothly extended to a small neighborhood of the closure of $U \times S^{2} \cup S^{2} \times V$ by shrinking $U,V$. Choose a regular compatible almost complex structure on $M$ that pulls back to the standard complex structure by $\iota$ on the closure of $\iota(U \times S^{2} \cup S^{2} \times V)$ and admits no non-constant $J$-holomorphic sphere with non-positive Chern number, use \cite[Remark 3.2.3]{jhol}. Define the Gromov's map $\psi$ using the almost complex structure $J$ and the two $J$-holomorphic spheres $\iota(z_{*}, .)$ and $\iota(., w_{*})$ for some $z_{*} \in U$ and $w_{*} \in V$, see proof of part $(1)$ of \cite[Theorem 9.4.7]{jhol}. The map $\psi$ coincides with $\iota$ on $U \times V$,  $\{z_{*}\} \times S^{2}$ and $S^{2} \times \{w_{*}\}$. Moreover the standard spheres $\{z\} \times S^{2}$ and $S^{2} \times \{w\}$ are $\psi^{*}\omega$-symplectic. Altering $\psi$ on $U \times S^{2}\backslash V \sqcup S^{2} \backslash U \times V$ separately using the following maps as in \cite[Theorem 9.4.7]{jhol} we get the final desired map.
$$\rho(V) \subset V, \ \ \rho(C) = w_{*},\ \ \rho = id\ \mathrm{on}\ S^{2}\backslash V, \ \ \det(d\rho) \geq 0,$$
$$\rho(U) \subset U, \ \ \rho(D) = z_{*},\ \ \rho = id\ \mathrm{on}\ S^{2}\backslash U, \ \ \det(d\rho) \geq 0.$$
\end{proof}

\section{Hamiltonian extension in dimension four}
\label{extension section}

In the following, for any subset $B\subset M$, by $\mathrm{Ham}_{B}(M)$ we shall mean the group of Hamiltonian diffeomorphisms of $M$ that are compactly supported inside $B$.
\begin{definition}[Topological disk]
Let $\mathbb{D}^{2}$ be the standard unit disk in $\mathbb{R}^{2}$. A topological disk in a $2$-dimensional manifold $\Sigma$ is the image of the standard disk under a continuous map $\mathbb{D}^{2} \rightarrow \Sigma$ that is homeomorphic to its image.
\end{definition}

In the following lemma by $\mathbb{D}^{2}(c)$ we mean the standard unit disk in $\mathbb{R}^{2n}$ equipped with the area form $\frac{c}{\pi}\omega_{0}$ where $\omega_{0}$ is the standard area form. In particular when we write $\mathbb{D}^{2}(\frac{c}{2})$ we think of the disk as the upper hemisphere of a sphere $S^{2}(c)$ equipped with area form $c\sigma$ where $\sigma$ is the standard area form on sphere with total area $1$. We also define $D(a,b)$ as follows:
$$D(a, b) := \{(x,y)\in \mathbb{D}^{2} : a \leq y \leq b \}.$$
Before stating the lemma below we fix some data. Let $0 < \epsilon_{1} < \epsilon_{2} < \epsilon_{3} < 1$ be some numbers and $a,b > 0$ be two positive numbers. Denote the set $D(-\epsilon_{i}, \epsilon_{i})$ by $D_{i}$ for $i = 1,2,3$.

\begin{definition}
\label{def:c0_smallness}
Let  $0 < \epsilon_{1} < \epsilon_{2} < c < \epsilon_{3} < 1$ be some numbers and $a,b$ be two positive numbers. Let $M = \mathbb{D}^{2}(\frac{a}{2}) \times \mathbb{D}^{2}(\frac{b}{2})$ and $D_{i}, i=1,2,3$ as above. A diffeomorphism $g$ of $M$ is called $(\epsilon_{1}, \epsilon_{2}, \epsilon_{3}, c)$-small if it satisfies the following conditions:
$$g(\mathbb{D}^{2}(\frac{a}{2}) \times D_{1}) \cup g^{-1}(\mathbb{D}^{2}(\frac{a}{2}) \times D_{1}) \subset \mathbb{D}^{2}(\frac{a}{2}) \times D_{2},$$
$$g(\mathbb{D}^{2}(\frac{a}{2}) \times D_{2}) \cup g^{-1}(\mathbb{D}^{2}(\frac{a}{2}) \times D_{2}) \subset \mathbb{D}^{2}(\frac{a}{2}) \times D(-a,a),$$
$$g^{-1}(\mathbb{D}^{2}(\frac{a}{2}) \times D(\epsilon_{3},1)) \subset \mathbb{D}^{2}(\frac{a}{2}) \times D(a,1),$$
$$g^{-1}(\mathbb{D}^{2}(\frac{a}{2}) \times D(-1, -\epsilon_{3})) \subset \mathbb{D}^{2}(\frac{a}{2}) \times D(-1, -a).$$
Note that being $C^{0}$ close enough to identity implies $(\epsilon_{1}, \epsilon_{2}, \epsilon_{3}, c)$-smallness.
\end{definition}

\begin{lemma}[Extension lemma]
\label{extension lemma 2}
\sloppy 
Let $a,b, \epsilon_{1}, \epsilon_{2}, \epsilon_{3}, c$ and $D_{i}, i=1,2,3$ be as in Definition \ref{def:c0_smallness}. Let $X = S^{2}(a) \times S^{2}(b)$ and $M = \mathbb{D}^{2}(\frac{a}{2}) \times \mathbb{D}^{2}(\frac{b}{2})$. Let $g$ be a Hamiltonian diffeomorphism of $X$ compactly supported in $M$ that is $C^{0}$-small enough. Then, there exist a Hamiltonian diffeomorphism $\psi \in \mathrm{Ham}(X)$ compactly supported in $\mathbb{D}^{2}(\frac{a}{2}) \times D_{3}$ that restricts to $g$ on $\mathbb{D}^{2}(\frac{a}{2}) \times D_{1}$.
\end{lemma}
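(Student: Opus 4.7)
The plan is to construct $\psi$ as a composition
\[
\psi := \psi_u \circ \psi_d \circ g^{-1},
\]
where $\psi_u, \psi_d \in \mathrm{Ham}(X)$ are obtained by symplectically extending two complementary ``halves'' of $g$ via Theorem~\ref{jhol theorem 2}(2). The $C^{0}$-smallness conditions of Definition~\ref{def:c0_smallness} are exactly what is needed to show that the resulting composition has the announced support and coincidence properties.

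I first construct $\psi_u$. The restriction $g|_{S^2(a) \times D(-\epsilon_2, 1)}$ is a symplectic embedding into $X$, and I extend it by the identity to a symplectic embedding
\[
\iota^u : S^-_\epsilon(\tfrac{a}{2}) \times S^2(b) \,\cup\, S^2(a) \times V^u \ \hookrightarrow\ X,
\]
where $S^-_\epsilon(\tfrac{a}{2})$ is a small open disk neighbourhood of the lower hemisphere of $S^2(a)$ on which $g$ is the identity (using that $g$ is compactly supported in the interior of $\mathbb{D}^2(\tfrac{a}{2}) \times \mathbb{D}^2(\tfrac{b}{2})$), and $V^u := S^2(b) \setminus \overline{D(-1, -\epsilon_2)}$ is the open topological disk in $S^2(b)$ obtained by removing the closed bottom cap. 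The additional portion $V^u \setminus D(-\epsilon_2, 1)$ is essentially the lower hemisphere of $S^2(b)$, on which $g$ is the identity because $g$ is supported in the upper hemisphere $\mathbb{D}^2(\tfrac{b}{2})$ and, by compact support, is the identity in a collar of the equator. This makes $\iota^u$ well-defined and continuous; it is symplectic because $g$ is; and since $g$ is Hamiltonian it sends horizontal and vertical spheres to the homology classes $A$ and $B$ of Theorem~\ref{jhol theorem 2}. Applying that theorem with $U = S^-_\epsilon(\tfrac{a}{2})$, $V = V^u$, and closed subsets $D^u \subset U$, $C^u \subset V^u$ chosen large enough so that $D^u$ contains the closed lower hemisphere of $S^2(a)$ and $C^u$ contains the strip $D_2$ together with most of $V^u \setminus D(-\epsilon_2, 1)$, I obtain a symplectic diffeomorphism $\psi_u$ of $X$ agreeing with $\iota^u$ on $D^u \times S^2 \cup S^2 \times C^u$. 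Since $H^1(X;\mathbb{R}) = 0$ and $\mathrm{Symp}(X)$ is connected, $\psi_u \in \mathrm{Ham}(X)$. A symmetric construction, swapping $D(-\epsilon_2, 1)$ for $D(-1, \epsilon_2)$ and using $V^d := S^2(b) \setminus \overline{D(\epsilon_2, a)}$ (the complement of a middle strip, still an open topological disk), yields $\psi_d \in \mathrm{Ham}(X)$ with the analogous properties.

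I then verify the two properties of $\psi$ by case analysis. If $x \in \mathbb{D}^2(\tfrac{a}{2}) \times D_1$, condition (1) of Definition~\ref{def:c0_smallness} places $g^{-1}(x)$ inside $\mathbb{D}^2(\tfrac{a}{2}) \times D_2 \subset \mathbb{D}^2(\tfrac{a}{2}) \times D(-1, \epsilon_2)$, where $\psi_d$ coincides with $g$, so $\psi_d \circ g^{-1}(x) = x$; and since $x \in D_1 \subset D(-\epsilon_2, 1)$, $\psi_u$ coincides with $g$ at $x$, giving $\psi(x) = g(x)$. For the support, I check $\psi = \mathrm{id}$ separately on the lower hemisphere of $S^2(a)$ (where $g, \psi_u, \psi_d$ are all the identity); on $\mathbb{D}^2(\tfrac{a}{2}) \times D(\epsilon_3, 1)$, where condition (3) puts $g^{-1}(x)$ into $\mathbb{D}^2(\tfrac{a}{2}) \times D(a, 1)$, the identity region of $\iota^d$, so $\psi_d$ fixes it and then $\psi_u$ sends it back to $x$ via $g$; the analogous lower case using condition (4); and on the lower hemisphere of $S^2(b)$ directly.

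The main obstacle is the simultaneous choice of the open disks $V^u, V^d$ and the closed control sets $C^u, C^d$: the disks must be topological disks in $S^2(b)$ compatible with the region where $g$ is the identity, so that $\iota^u, \iota^d$ are well-defined and continuous, while the control sets must be large enough to prescribe $\psi_u, \psi_d$ on every region that enters the case analysis for $\psi$. The quantitative conditions of Definition~\ref{def:c0_smallness} --- especially (3) and (4), which place $g^{-1}$ of the top and bottom strips $D(\pm\epsilon_3, \pm 1)$ inside the corresponding strips $D(\pm a, \pm 1)$ --- are exactly what enables this bookkeeping.
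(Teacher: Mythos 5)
Your overall strategy — restrict $g$ to two overlapping half-regions, extend each by identity to a symplectic embedding of an open polydisk-like domain, apply Theorem~\ref{jhol theorem 2}(2), and set $\psi = \psi_u \circ \psi_d \circ g^{-1}$ — matches the paper, and your choice of $V^d = S^2(b) \setminus \overline{D(\epsilon_2, a)}$ is essentially the paper's $\mathcal{U}^d$. However, your choice of $V^u$ is too small and creates a genuine gap.

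You take $V^u := S^2(b) \setminus \overline{D(-1,-\epsilon_2)}$, the complement of a \emph{cap}, so the only ``identity region'' of $\iota^u$ (beyond the strip $D(-\epsilon_2,1)$ where $\iota^u = g$) is a neighbourhood of the lower hemisphere. The paper instead takes $\mathcal{U}^u = S^-_{\epsilon}(\tfrac b2) \cup D(-1,-a) \cup D(-\epsilon_2,1)$, i.e.\ the complement of the \emph{thin strip} $D(-a,-\epsilon_2)$. The extra piece $D(-1,-a)$ is crucial: on $S^2(a)\times D(-1,-a)$ the embedding $\iota^u$ is declared to be the identity even though $g$ itself need not be identity there, and Theorem~\ref{jhol theorem 2}(2), applied with a closed control set containing $D(-1,-a)$, forces $\psi_u = \mathrm{id}$ on $S^2(a)\times D(-1,-\epsilon_3)$. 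Your $V^u$ excludes $D(-1,-a)$, so $C^u \subset V^u$ cannot reach it, and $\psi_u$ is completely uncontrolled on $S^2(a)\times D(-1,-\epsilon_3)$.

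This breaks the ``analogous lower case'' in your verification. For $p \in \mathbb{D}^2(\tfrac a2)\times D(-1,-\epsilon_3)$, condition (4) gives $g^{-1}(p)\in \mathbb{D}^2(\tfrac a2)\times D(-1,-a) \subset \mathbb{D}^2(\tfrac a2)\times D(-1,\epsilon_2)$, which is in the $g$-region of $\iota^d$, so $\psi_d(g^{-1}(p)) = g(g^{-1}(p)) = p$. You then need $\psi_u(p) = p$, but $p$ lies in $D(-1,-\epsilon_3)$, which your construction does not control. This case is not the mirror image of the upper case ($p \in D(\epsilon_3,1)$), because there the relevant strip $D(a,1)$ lies in the $g$-region of $\iota^u$ \emph{and} the identity region of $\iota^d$, so both extensions are pinned down; the asymmetry is exactly why the paper adds $D(-1,-a)$ to $\mathcal{U}^u$. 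To repair your argument, replace $V^u$ by $S^2(b)\setminus \overline{D(-a,-\epsilon_2)}$ (so the removed set is the thin strip, not the cap), and choose $C^u$ to contain $D(-1,-a)$ as well as $D_2$ and the lower hemisphere.
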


\begin{proof}
identifying $\mathbb{D}^{2}(\frac{b}{2})$ with the upper hemisphere of $S^{2}(b)$, consider the following subsets of $S^{2}(b)$,
\[
\mathcal{U}^{u} := S^{-}_{\epsilon}(\frac{b}{2}) \cup D(-1,-a) \cup D(-\epsilon_{2},1)
\]
\[
\mathcal{U}^{d} :=  S^{-}_{\epsilon}(\frac{b}{2}) \cup D(a,1) \cup D(-1,\epsilon_{2})
\]
where $S^{-}_{\epsilon}(\frac{b}{2})$ is a small enough neighborhood of the lower hemisphere so that $g$ is identity on $S^{2}(a) \times S^{-}_{\epsilon}(\frac{b}{2})$. See Figure \ref{extension domains figure}.
\begin{figure}[ht]
\labellist
\small\hair 2pt
\pinlabel $\epsilon_{3}$ at 10 137
\pinlabel $a$ at 10 128
\pinlabel $\epsilon_{2}$ at 10 110
\pinlabel $\epsilon_{1}$ at 10 92
\pinlabel $-\epsilon_{1}$ at 7 82
\pinlabel $-\epsilon_{2}$ at 7 65
\pinlabel $-\epsilon_{3}$ at 7 37

\pinlabel $\epsilon_{1}$ at 350 92
\pinlabel $-\epsilon_{1}$ at 347 82
\pinlabel $\epsilon_{2}$ at 350 110
\pinlabel $-\epsilon_{2}$ at 347 65
\pinlabel $\epsilon_{3}$ at 347 137
\pinlabel $-a$ at 347 45
\pinlabel $-\epsilon_{3}$ at 347 37
\endlabellist
\centering
\includegraphics{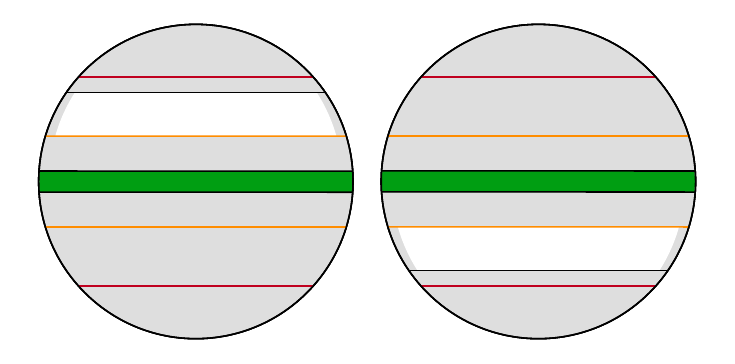}
\caption{Extension domains: $\mathcal{U}^{d}$ is on the left and $\mathcal{U}^{u}$ is on the right.}
\label{extension domains figure}
\end{figure}

\noindent
Restrict ${g}$ to $S^{2}(a) \times D(-\epsilon_{2},1)$ and extend the restriction to a symplectic embedding of $ S^{-}_{\epsilon}(\frac{a}{2}) \times S^{2}(b) \cup S^{2}(a) \times \mathcal{U}^{u}$ into $S^{2}(a) \times S^{2}(b)$ by identity where $S^{-}_{\epsilon}(\frac{a}{2})$ is a small enough neighborhood of the lower hemisphere in $S^{2}(a)$. Use Theorem \ref{jhol theorem 2} to extend the resulting embedding to a symplectomorphism $\psi_{u}: S^{2}\times S^{2}\rightarrow S^{2}\times S^{2}$. Restrict ${g}$ to $S^{2}(a) \times D(-1,\epsilon_{2})$ and extend it by identity to $S^{-}_{\epsilon}(\frac{a}{2}) \times S^{2}(b) \cup S^{2}(a) \times \mathcal{U}^{d}$, and do the same construction to obtain a symplectomorphism $\psi_{d}$ of $S^{2}\times S^{2}$. The resulting symplectomorphisms are Hamiltonian diffeomorphisms. This is implied by Remark \ref{rem:pi0_symp}, the facts that the maps $\psi_{u}, \psi_{d}$ induce identity on the second Homology and $S^{2} \times S^{2}$ is simply connected. The Hamiltonian diffeomorphism $\psi := \psi_{u}\circ \psi_{d}\circ g^{-1}$ satisfies the desired properties. Namely, if $p \in \mathbb{D}^{2}(\frac{a}{2}) \times D(\epsilon_{3},1)$ then since $g^{-1}(p)\in \mathbb{D}^{2}(\frac{a}{2}) \times D(a,1)$ we have
\[
\psi(p) = \psi_{u}\circ\psi_{d}\circ g^{-1}(p) = \psi_{u}\circ g^{-1}(p) = g\circ g^{-1}(p) = p,
\]
if $p\in \mathbb{D}^{2}(\frac{a}{2}) \times D(-1,-\epsilon_{3})$ then
\[
\psi(p) = \psi_{u}\circ\psi_{d}\circ g^{-1}(p) = \psi_{u}\circ g \circ g^{-1}(p) = \psi_{u}(p) = p
\]
and finally if $p \in \mathbb{D}^{2}(\frac{a}{2}) \times D(-\epsilon_{1},\epsilon_{1})$, then
\[
\psi(p) = \psi_{u}\circ \psi_{d}\circ g^{-1}(p) = \psi_{u}\circ g \circ g^{-1}(p) = \psi_{u}(p) = g(p).
\]
Clearly $\psi$ is identity on $S^{-}_{\epsilon}(\frac{a}{2}) \times S^{2}(b) \cup S^{2}(a) \times S^{-}_{\epsilon}(\frac{b}{2})$. Therefore $\psi$  is the desired Hamiltonian diffeomorphism.
\end{proof}

\begin{remark}
\label{rem:pi0_symp}
\sloppy Let $a,b > 0$ be two positive real numbers. Let $G(a,b)$ be the symplectic mapping class group of $S^{2}(a) \times S^{2}(b)$, i.e. $G(a,b) = \pi_{0}(\mathrm{Symp}(S^{2}(a) \times S^{2}(b)))$. In his 1985 paper, Gromov proved that when $a = b$, the group $G(a,b)$ is isomorphic to $\mathbb{Z}_{2}$. Later, McDuff and Abreu \cite{Ab98, Ab00} proved that for all $a \neq b$ the group $G(a,b)$ is a trivial group, i.e. the group $\mathrm{Symp}(S^{2}(a) \times S^{2}(b))$ is connected.
\end{remark}

\begin{remark}
\label{rem:ext_lem_2}
It is expected that an analogous statement should hold for $M = S^{2} \times \mathbb{D}^{2}$ and $M = S^{2}\times S^{2}$. For instance for the second case one should have the following; for annuli $A_{1} \subset A_{2} \subset A_{2} \subset S^{2}$ and a Hamiltonian diffeomorphism $\psi \in \mathrm{Ham}(S^{2}\times S^{2})$ that is $C^{0}$-small enough, there exists $\phi \in \mathrm{Ham}(S^{2}\times S^{2})$ that is supported in $S^{2}\times A_{3}$ and coincides with $\psi$ on $S^{2}\times A_{1}$. One way to approach this, is to prove a refined version of Theorem \ref{jhol theorem 2}. 
\end{remark}

\section{Hamiltonian fragmentation in dimension four}
\label{fragmentation section}
In this section we prove a fragmentation lemma for the symplectic $4$-manifold $\mathbb{D}^{2}(\frac{a}{2}) \times \mathbb{D}^{2}(\frac{b}{2})$. 

\begin{lemma}[Fragmentation lemma]
\label{fragmentation lemma 2}
Let $M = \mathbb{D}^{2}(\frac{a}{2}) \times \mathbb{D}^{2}(\frac{b}{2})$. Let $\rho > 0$ be a positive number and $m > 0$ be a  positive integer. Divide the unit disk $\mathbb{D}^{2}(\frac{b}{2})$ into $m$ horizontal strips with equal area $\frac{b}{2m}$ and denote them by $D_{i}$. Define $U_{i}$ to be the interior of $D_{i}$ for every $i$. Then, there exists $\delta > 0$ such that for every $g \in \mathrm{Ham}_{c}(M)$ with $d_{C^{0}}(g,id) < \delta$ there are $g_{i} \in \mathrm{Ham}_{\mathbb{D}^{2}(\frac{a}{2}) \times U_{i}}(M)$ for $i = 1, \dots, m$, and $\theta \in \mathrm{Ham}_{\mathbb{D}^{2}(\frac{a}{2}) \times U}(M)$ where $U$ is a disjoint union of topological disks in $\mathbb{D}^{2}$ with total area less than $\rho$, so that, $g = g_{1}\circ \dots \circ g_{m}\circ \theta$.
\end{lemma}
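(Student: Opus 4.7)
The plan is to apply the Extension Lemma (Lemma~\ref{extension lemma 2}) once at each of the $m-1$ horizontal separators $y_1<\cdots<y_{m-1}$ between adjacent strips $D_i$ and $D_{i+1}$. Around each $y_i$ I fix nested thin horizontal strips $D_{i,1}\subset D_{i,2}\subset D_{i,3}$, with widths small enough that the outermost strips $D_{i,3}$ are pairwise disjoint and their total area is below $\rho$. Set $U:=\sqcup_i D_{i,3}$; this is a disjoint union of topological disks of total area less than $\rho$, and it will carry $\theta$. Since there are only finitely many interfaces, a single $\delta>0$ can be chosen so that $d_{C^0}(g,\mathrm{id})<\delta$ forces $g$ to satisfy the smallness hypothesis of Definition~\ref{def:c0_smallness} at every $y_i$, after the vertical translation in $S^2(b)$ which brings $y_i$ to the equator. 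Applying Lemma~\ref{extension lemma 2} at each $y_i$ then yields $\psi_i\in\mathrm{Ham}(X)$ supported in $\mathbb{D}^2(\tfrac{a}{2})\times D_{i,3}\subset M$ and coinciding with $g$ on $\mathbb{D}^2(\tfrac{a}{2})\times D_{i,1}$.

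The $\psi_i$ have pairwise disjoint supports, hence commute, and I set $\theta := \psi_1\circ\cdots\circ\psi_{m-1}$, an element of $\mathrm{Ham}_{\mathbb{D}^2(a/2)\times U}(M)$. The central verification is that $h:=g\circ\theta^{-1}$ is the identity on an open neighborhood $V$ of $\mathbb{D}^2(\tfrac{a}{2})\times(\cup_i y_i)$. Indeed, for $p$ in the component of $V$ close to $y_i$, $C^0$-smallness of $g$ forces $g^{-1}(p)\in\mathbb{D}^2(\tfrac{a}{2})\times D_{i,1}$, so $\psi_i(g^{-1}(p))=g(g^{-1}(p))=p$; and since both $p$ and $g^{-1}(p)$ sit in $\mathbb{D}^2(\tfrac{a}{2})\times D_{i,2}$, the remaining $\psi_j$ fix both points by disjointness of supports, whence $\theta(g^{-1}(p))=p$ and $h(p)=p$.

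Once $h$ is the identity on $V$, its support splits into pieces lying one inside each $\mathbb{D}^2(\tfrac{a}{2})\times U_i$, and restricting $h$ to each and extending by identity yields commuting diffeomorphisms $g_i\in\mathrm{Ham}_{\mathbb{D}^2(a/2)\times U_i}(M)$ with $h=g_1\circ\cdots\circ g_m$. The Hamiltonian nature of each $g_i$ follows from the vanishing of $H^1_c(\mathbb{D}^2(\tfrac{a}{2})\times U_i)$, so that any compactly supported symplectic diffeomorphism of this Darboux-type domain that is symplectically isotopic to the identity is automatically Hamiltonian. Combining gives $g=h\circ\theta=g_1\circ\cdots\circ g_m\circ\theta$, as required. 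The main technical obstacle I expect is the legitimate adaptation of the Extension Lemma to non-equatorial interfaces, which requires either re-running the pseudo-holomorphic argument of Lemma~\ref{extension lemma 2} with strips centered at $y_i$ or composing with a rotation of $S^2(b)$ sending $y_i$ to the equator; care is also needed near the top and bottom strips, where the compact support of $g$ inside $M$ ensures the extension is trivial.
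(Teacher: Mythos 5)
Your proof is correct and follows the paper's approach exactly: both apply the Extension Lemma at each of the $m-1$ interfaces to obtain commuting $\psi_i$ supported in thin disjoint strips of total area less than $\rho$, set $\theta$ as their composite, and factor $g\theta^{-1}=g_1\circ\cdots\circ g_m$ with each $g_i$ supported in $\mathbb{D}^2(\tfrac{a}{2})\times U_i$. You have in fact spelled out the pointwise verification that $g\theta^{-1}$ fixes a neighborhood of each interface (and the adaptation of Lemma~\ref{extension lemma 2} to strips not centered at the equator), steps which the paper states without proof.
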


Before proving the lemma, we would like to point out that the in \cite[Section 5.3]{EPP12}, the authors suggest that the powerful tool of pseudo-holomorphic theory in dimension four could be used to prove some $C^{0}$-small fragmentation results in dimension four. Although, the fragmentation proved here is not a $C^{0}$-small fragmentation, but it is likely that our method could be used to prove a $C^{0}$-small fragmentation.

\begin{proof}[Proof of Lemma \ref{fragmentation lemma 2}]
For every $i \in \{ 1,\dots, m-1\}$, define $V_{i,1} \subset V_{i,2} \subset V_{i,3} \subset D_{i}\cup D_{i+1}$ be small enough horizontal strips so that $D_{i}\cap D_{i+1} \subset V_{i,1}$ and for all $i \neq j$ we have $V_{i,3}\cap V_{j,3} = \emptyset$, See Figure \ref{fragmentation figure 2}. Let $\delta > 0$ be a small enough positive number that satisfies the following, for every $g\in \mathrm{Ham}(M)$, $d_{C^{0}}(g, id) < \delta$ implies that $g$ satisfies the properties of Lemma \ref{extension lemma 2} for all collections of strips $\{V_{i,j}\}_{j=1}^{3}$. Let $g\in \mathrm{Ham}_{c}(M)$ with $d_{C^{0}}(g,id) < \delta$. Applying Lemma \ref{extension lemma 2} to $g$, we find a Hamiltonian diffeomorphism $\psi_{i}$ of $M$ compactly supported in $\mathbb{D}^{2}(\frac{a}{2}) \times V_{i,3}$ and restricts to $g$ on $\mathbb{D}^{2}(\frac{a}{2}) \times V_{i,1}$. Define $\theta:= \psi_{1}\psi_{2}\dots\psi_{m-1}$. Then we have $g\theta^{-1} = g_{1}\dots g_{m}$ where $g_{i}$ is compactly supported in $\mathbb{D}^{2}(\frac{a}{2}) \times U_{i}$ for all $i = 1,\dots, m$.
\begin{figure}[ht]
\labellist
\small\hair 2pt
\pinlabel $V_{i,2}$ at 70 85
\pinlabel \scalebox{0.7}{$V_{i,1}$} at 261 85
\pinlabel $V_{i,3}$ at 280 85
\endlabellist
\centering
\includegraphics{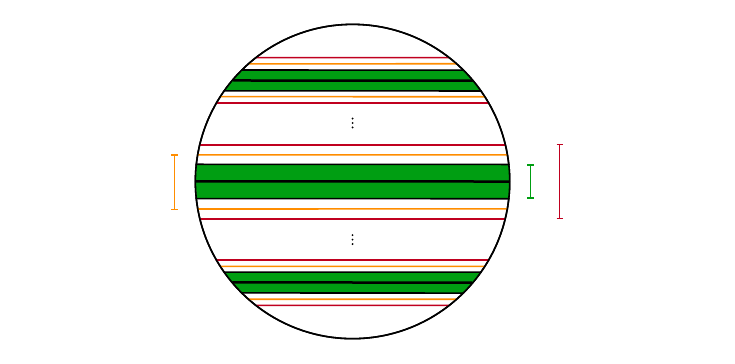}
\caption{Fragmentation pattern.}
\label{fragmentation figure 2}
\end{figure}
\end{proof}

\section{Hofer approximation by fixed small supports}
\label{hofer approximation section}

\begin{definition}[Hofer norm]
\label{hofer norm}
Let $(M,\omega)$ be a symplectic manifold, denote by $C^{\infty}_{c}(M\times [0,1],\mathbb{R})$ the space of compactly supported Hamiltonian functions and let $\mathrm{Ham}(M,\omega)$ be the group of compactly supported Hamiltonian diffeomorphisms. Let $H\in C^{\infty}_{c}(M\times [0,1],\mathbb{R})$, define,
\[
\norm{H}:= \int_{0}^{1}\big(\max{H_{t}} - \min{H_{t}}\big)\ dt.
\]
For a Hamiltonian diffeomorphism $\phi\in \mathrm{Ham}(M,\omega)$ we define the Hofer-norm of $\phi$ by the following:
\[
\norm{\phi}_{H}:= \underset{H\mapsto \phi}\inf \norm{H},
\]
where the infimum is taken over all compactly supported Hamiltonian functions $H$ whose time-one map is $\phi$. For two Hamiltonian diffeomorphisms $\phi, \psi$ we define their Hofer distance by:
\[
d_{H}(\phi, \psi) := \norm{\phi\psi^{-1}}_{H}.
\]
Non-degeneracy of this distance function is highly non-trivial, see introduction.
\end{definition}

\begin{theorem}[Main theorem]
\label{main theorem}
Let $X := S^{2}(a) \times S^{2}(b)$ and $M := \mathbb{D}^{2}(\frac{a}{2}) \times \mathbb{D}^{2}(\frac{b}{2})$. Let $B$ be a topological-disk in $S^{2}(b)$. Then, for every $\epsilon > 0$ there exists $\delta > 0$ so that the following holds; for every $g\in \mathrm{Ham}_{M}(X)$ satisfying $d_{C^{0}}(g,id) < \delta$ there exist $\psi \in \mathrm{Ham}_{S^{2}(a) \times B}(X)$ satisfying
\[
d_{H}(g, \psi) < \epsilon.
\]
\end{theorem}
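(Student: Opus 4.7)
The plan is to execute the outline given in the introduction, combining the fragmentation lemma (Lemma \ref{fragmentation lemma 2}) with a conjugation-displacement trick, and estimating the Hofer distance via a commutator bound. Fix $\epsilon > 0$ and the topological disk $B \subset S^{2}(b)$. I would choose positive integers $N, k$ and a small $\rho > 0$ (to be calibrated from $\epsilon$ and $B$) and apply Lemma \ref{fragmentation lemma 2} with $m = kN$ and parameter $\rho$, which yields a threshold $\delta > 0$ such that every $g \in \mathrm{Ham}_{M}(X)$ with $d_{C^{0}}(g, \mathrm{id}) < \delta$ decomposes as
$$g = g_{1} \circ \cdots \circ g_{kN} \circ \theta,$$
with the $g_{j}$ supported in pairwise disjoint strip products $\mathbb{D}^{2}(\frac{a}{2}) \times U_{j}$, and $\theta$ supported in $\mathbb{D}^{2}(\frac{a}{2}) \times U$ for $U$ a disjoint union of topological disks of total area less than $\rho$. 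Since the $U_{j}$ are disjoint the $g_{j}$'s commute; I would reorder and partition them into $N$ groups of size $k$, setting $f_{i}$ to be the composition of the $g_{j}$'s in the $i$-th group. This yields $g = f_{1} \circ \cdots \circ f_{N} \circ \theta$ with $\mathrm{supp}(f_{i}) \subset \mathbb{D}^{2}(\frac{a}{2}) \times E_{i}$ and $|E_{i}| = \frac{b}{2N}$.

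The central geometric step is to produce $h_{1}, \ldots, h_{N}, h_{\theta} \in \mathrm{Ham}(X)$ of the product form $\mathrm{id}_{S^{2}(a)} \times \tilde{h}$ sending $\mathrm{supp}(f_{i})$ and $\mathrm{supp}(\theta)$, respectively, into $S^{2}(a) \times B$, with the collective bound
$$\sum_{i=1}^{N} \|h_{i}\|_{H} + \|h_{\theta}\|_{H} < \frac{\epsilon}{2}.$$
The estimate on $\|h_{\theta}\|_{H}$ would follow by taking $\rho$ small enough, since a disjoint union of small topological disks can be brought into $B$ by a Hamiltonian whose norm is comparable to its total area. For the $h_{i}$'s, the partition into groups and the parameter $k$ should be chosen so that each $E_{i}$ can be pushed into $B$ by a planar Hamiltonian of small oscillation on $S^{2}(b)$, exploiting the thinness of the individual strips (obtained by taking $k$ large) together with the freedom to arrange the partition.

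With the $h$'s in place I would define
$$\psi := (h_{1} f_{1} h_{1}^{-1}) \circ \cdots \circ (h_{N} f_{N} h_{N}^{-1}) \circ (h_{\theta} \theta h_{\theta}^{-1}) \in \mathrm{Ham}_{S^{2}(a) \times B}(X),$$
and telescope from $g$ to $\psi$ by replacing one factor at a time. Each telescoping step compares $f$ to $h f h^{-1}$, and by conjugation-invariance of the Hofer norm together with the standard commutator estimate
$$\|\phi \eta \phi^{-1} \eta^{-1}\|_{H} \leq 2 \min(\|\phi\|_{H}, \|\eta\|_{H}),$$
contributes at most $2 \|h\|_{H}$ to the Hofer distance. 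Summing gives
$$d_{H}(g, \psi) \leq 2 \sum_{i=1}^{N} \|h_{i}\|_{H} + 2 \|h_{\theta}\|_{H} < \epsilon.$$

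The hard part will be the geometric construction of the $h_{i}$'s with the required collective Hofer bound. A displacement-energy argument shows that each $\|h_{i}\|_{H}$ is bounded below by something of order $|E_{i}| = \frac{b}{2N}$, so the gain cannot come from making any single factor arbitrarily small; instead it must come from the careful arrangement of the strips within each group relative to $B$, together with the flexibility provided by the partition and the parameter $k$. Once this geometric input is in place, the remaining Hofer estimate via the commutator trick is essentially formal.
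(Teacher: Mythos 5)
Your architecture matches the paper's proof: fragment via Lemma~\ref{fragmentation lemma 2}, group the $m=kN$ pieces into $N+1$ factors $f_1,\dots,f_N,\theta$, conjugate each factor to have support in $S^2(a)\times B$, and telescope using bi-invariance of $d_H$ together with $d_H(f,hfh^{-1})\le 2\|h\|_H$. The final Hofer bookkeeping is correct and is exactly what the paper does.

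However, you stop short of the one idea that makes the estimate close, and your heuristic in the last paragraph is wrong in a way that would derail the argument. You assert that a displacement-energy argument forces $\|h_i\|_H\gtrsim |E_i|=\tfrac{b}{2N}$, and conclude that the gain must come from some clever ``arrangement.'' This lower bound is false precisely because of the freedom you have in choosing the grouping. The paper groups the strips by residue class mod $N$, so that each $V_j=\bigsqcup_{i\equiv j\ (\mathrm{mod}\,N)}U_i$ is a \emph{disjoint union of $k$ thin strips}, each of area $\rho=\tfrac{b}{2m}$, and not a single connected region of area $\tfrac{b}{2N}$. For such a fragmented set the analogue of the displacement-energy bound scales with the area of a single thin piece, not with the total area: the paper's Lemma~\ref{displacement lemma} (a stabilization of \cite[Lemma 4.8]{GHS20}) produces $h_j$ with $h_j(S^2(a)\times V_j)\subset S^2(a)\times B$ and $\|h_j\|_H\le 2\rho=\tfrac{b}{m}$. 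One can independently push each thin strip a small distance and these independent pushes do not add in the Hofer norm. If instead you grouped the strips consecutively, each $E_i$ would be one thick strip of area $\tfrac{b}{2N}$ and your worry would be warranted; that is why the grouping must be by residue class.

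With Lemma~\ref{displacement lemma} in hand, the rest is as you wrote: choose $N$ so that $\tfrac{b}{2N}<\mathrm{area}(B)$ (ensuring each $V_j$ and the exceptional set both fit into $B$), then choose $m$ a multiple of $N$ large enough that $\tfrac{2(N+1)b}{m}<\epsilon$, set $\rho=\tfrac{b}{2m}$, and obtain
\[
d_H(g,\psi)\le\sum_{j=1}^{N+1}2\|h_j\|_H\le\frac{2(N+1)b}{m}<\epsilon .
\]
So your proposal is a correct skeleton, but it is missing, and in fact argues against, the displacement lemma that is the content of the ``hard part'' you flag. Without that lemma (or an equivalent replacement) the Hofer estimate does not close.
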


\begin{remark}
\label{rem:hofer_app_ext}
An analogous statement is expected to hold for $M = S^{2} \times  \mathbb{D}^{2}$ and $M = S^{2} \times S^{2}$. This will be possible by showing a refined version of Theorem \ref{jhol theorem 2}, see Remark \ref{rem:ext_lem_2} as well.
\end{remark}

\begin{remark}
\label{two hofer norms}
In \S\ref{spectral estimators section} we work with an alternative definition of the Hofer norm which is denoted by $\norm{.}_{h}$ and defined as follows:
\[
\norm{\phi}_{h}:= \underset{H\mapsto \phi}\inf \int_{0}^{1}\max{|H_{t}|}\ dt,
\]
where the infimum is taken over all mean zero Hamiltonian functions whose time-one map is $\phi$. Here by mean zero we mean $\int_{M}H_{t}\omega^{n} = 0$ for all $t\in [0,1]$. It is an easy  observation that for every Hamiltonian diffeomorphism $\phi$ we have,
\[
\norm{\phi}_{h}\leq \norm{\phi}_{H}\leq 2\norm{\phi}_{h}.
\]
Namely, it follows from the following two facts; for a function $b\in C^{\infty}([0,1])$ we have $\norm{H + b} = \norm{H}$, for a mean zero Hamiltonian $H$ we have $\max{H_{t}} . \min{H_{t}} \leq 0$ and consequently,
\[
\max{|H_{t}|}\leq \max{H_{t}} - \min{H_{t}}\leq 2\max{|H_{t}|}.
\]
\end{remark}

In the following lemma as usual $\mathbb{D}^{+}(\frac{b}{2})$ denotes the upper hemisphere of $S^{2}(b)$ and $p \in S^{2}(b)$ is the south pole.
\begin{lemma}
\label{displacement lemma}
Let $\rho \in (0,\frac{b}{2})$. Let $B_{1},\dots, B_{k}\subset \mathbb{D}^{+}(\frac{b}{2})$ be a collection of disjoint open topological disks with area less than $\rho$. Let $B \subset S^{2} \backslash \{p\}$ be a topological disk with $area(B) > k\rho$. Then there exists a Hamiltonian diffeomorphism $h \in \mathrm{Ham}(S^{2}(a) \times S^{2}(b))$ supported in $S^{2}(a) \times S^{2}(b) \backslash \{p\}$ such that $h \big(S^{2}(a) \times \cup_{i}B_{i}\big) \subset S^{2}(a) \times B$ and $\norm{h}_{H} \leq 2\rho$, where $\norm{.}_{H}$ is the Hofer norm.
\end{lemma}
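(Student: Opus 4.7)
My plan is to reduce the problem to two dimensions via a product construction and then carry out the two-dimensional displacement explicitly. Concretely, I seek to construct $h$ in the form $h = \mathrm{id}_{S^{2}(a)} \times h'$ with $h' \in \mathrm{Ham}(S^{2}(b))$ supported in $S^{2}(b) \setminus \{p\}$, satisfying $h'(\cup_{i} B_{i}) \subset B$ and $\norm{h'}_{H} \leq 2\rho$. If $H'(t,y)$ generates $h'$ on $S^{2}(b)$, then $H(t,x,y) := H'(t,y)$ on $S^{2}(a) \times S^{2}(b)$ generates $h$, and since $\max_{(x,y)} H_{t} = \max_{y} H'_{t}$ (and similarly for $\min$), we obtain $\norm{H} = \norm{H'}$ and thus $\norm{h}_{H} \leq \norm{h'}_{H}$. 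Moreover the support of $h$ lies in $S^{2}(a) \times \mathrm{supp}(h') \subset S^{2}(a) \times (S^{2}(b) \setminus \{p\})$.

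For the $2$-dimensional construction of $h'$, after discarding components of $B_{i}$ that already lie in $B$ I may assume all $B_{i}$ are disjoint from $B$. Using $\mathrm{area}(B) > k\rho > \sum_{i} \mathrm{area}(B_{i})$, I choose pairwise disjoint open topological subdisks $B'_{i} \subset B$ with $\mathrm{area}(B'_{i})$ slightly larger than $\mathrm{area}(B_{i})$. Since $S^{2}(b) \setminus \{p\}$ is homeomorphic to an open disk, planar topology then provides pairwise disjoint topological tubes $T_{i} \subset S^{2}(b) \setminus \{p\}$ with each $T_{i} \supset B_{i} \cup B'_{i}$ and $\mathrm{area}(T_{i}) < 2\rho$, obtained by taking the strip connecting $B_{i}$ to $B'_{i}$ sufficiently thin.

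For each $i$ I build an autonomous Hamiltonian $H'_{i}$ supported in $T_{i}$ with $\phi_{H'_{i}}^{1}(B_{i}) \subset B'_{i}$ and $\lvert H'_{i} \rvert < \rho$. Parametrize $T_{i}$ as a rectangle $[0, L_{i}] \times [0, W_{i}]$ with $L_{i} W_{i} < 2\rho$ via an area-preserving diffeomorphism so that $B_{i}$ lies at the left end and $B'_{i}$ at the right end. Take $H'_{i}(x,y) = f_{i}(y)\chi_{i}(x)$, where $\chi_{i}$ is a standard bump equal to $1$ on the relevant $x$-range and $f_{i}$ satisfies $f'_{i}(y) = L_{i}$ on the $y$-extent of $B_{i}$; the resulting flow translates $B_{i}$ horizontally onto $B'_{i}$, and centering $f_{i}$ about zero gives $\lvert f_{i} \rvert \leq L_{i} W_{i}/2 < \rho$. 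Setting $H' := \sum_{i} H'_{i}$, the disjointness of the tubes makes the $H'_{i}$ have pairwise disjoint supports, so the flows commute and $h' = \phi_{H'}^{1}$. At each point at most one $H'_{i}$ is nonzero, whence $\lvert H' \rvert < \rho$, giving $\max H' - \min H' < 2\rho$ and hence $\norm{h'}_{H} \leq 2\rho$.

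The hard part is maintaining the pointwise bound $\lvert H'_{i} \rvert \leq \rho$ while arranging that $\phi_{H'_{i}}^{1}$ actually sends $B_{i}$ into $B'_{i}$ in the presence of the bump cutoff $\chi_{i}$, together with ensuring the $T_{i}$ are genuinely pairwise disjoint and each of area below $2\rho$. The disjointness of the tubes is essential: this is what prevents the pointwise bound $\lvert H' \rvert \leq \rho$ from accumulating factors of $k$ when summing the $H'_{i}$. Without it, the oscillation of $H'$ could grow like $k\rho$, violating the target estimate.
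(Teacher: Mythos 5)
Your two-step strategy -- reduce to a two-dimensional displacement on $S^{2}(b)$ by a product construction, and observe that the Hofer norm of $\mathrm{id}_{S^{2}(a)}\times h'$ is controlled by that of $h'$ -- is exactly the paper's strategy for the four-dimensional part of the lemma. Where the two proofs diverge is in the two-dimensional input: the paper simply cites the known result \cite[Lemma 4.8]{GHS20}, which states precisely that a collection of disjoint Jordan domains of area $<\rho$ can be mapped into a disjoint Jordan domain of area $>k\rho$ by a Hamiltonian diffeomorphism of $S^{2}$ supported away from the south pole and of Hofer norm $\leq 2\rho$, whereas you attempt to reprove this from scratch with a disjoint-tube construction. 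Your sketch is in the spirit of the standard proof of that cited lemma -- the crucial idea, that disjointness of the tubes keeps the pointwise oscillation at $2\rho$ rather than $2k\rho$, is indeed the heart of the matter -- but as you yourself flag, you have not actually closed the argument. Two concrete issues: first, the reduction ``after discarding components of $B_{i}$ that already lie in $B$, I may assume all $B_{i}$ are disjoint from $B$'' is not valid, since a $B_{i}$ that partially overlaps $B$ is neither contained in $B$ nor disjoint from it (this is harmless -- the tube construction does not need $B_{i}\cap B=\emptyset$ -- but it should be removed rather than asserted). Second, and more substantively, the existence of pairwise disjoint tubes $T_{i}$ of area $<2\rho$ routing each $B_{i}$ to a slot inside $B$, together with the careful cutoff management ensuring the flow rigidly carries $B_{i}$ into $B'_{i}$ while keeping $\lvert H'_{i}\rvert<\rho$, is precisely the content of \cite[Lemma 4.8]{GHS20} and requires the planar-topology and cutoff arguments you only gesture at. You would do better to cite that lemma directly, as the paper does, and devote the writeup to the product lift, which you do carry out correctly.
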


\begin{proof}
It is known that there exists an $h_{1} \in \mathrm{Ham}(S^{2}(b))$ supported in $S^{2} \backslash \{p\}$ so that $h_{1}(\cup_{i}B_{i}) \subset B$ and $\norm{h_{1}}_{H} \leq 2\rho$, see \cite[Lemma 4.8]{GHS20}. If $G_{1}: S^{2}(b) \times [0,1]\rightarrow \mathbb{R}$ is a Hamiltonian function generating $h_{1}$ then $G: S^{2}(a) \times S^{2}(b) \times [0,1]\rightarrow \mathbb{R}$ defined by $G(x,y,t):= G_{1}(x,t)$ generates the Hamiltonian diffeomorphism $h = h_{1} \times id$ of $S^{2}(a) \times S^{2}(b)$. We have that, 
$$\int_{0}^{1}\big(\max_{S^{2}(a) \times S^{2}(b)} G(.,.,t) - \min_{S^{2}(a) \times S^{2}(b)} G(.,.,t) \big) dt $$
$$ = \int_{0}^{1} \big( \max_{S^{2}(b)} G_{1}(.,t) - \min_{S^{2}(b)} G_{1}(.,t) \big) dt.$$
Thus, we obtain, $\norm{h}_{H}\leq \norm{h_{1}}_{H}\leq 2\rho$, $h$ is supported in $S^{2}(a) \times S^{2}(b) \backslash \{p_{-}\}$ and $h(S^{2}(a) \times \cup_{i}B_{i}) \subset S^{2}(a) \times B$. 
\end{proof}

We are ready to prove Theorem \ref{main theorem}:
\begin{proof}[proof of Theorem \ref{main theorem}]
Let $N$ be a positive integer so that $\frac{1}{2N} < area(B)$ and $m$ be a multiple of $N$ so that $\frac{2(N+1)}{m} < \epsilon$. Let $\rho = \frac{1}{2m}$. For the choices $\rho$ and $m$ defined above, let $\delta > 0$ be given by Lemma \ref{fragmentation lemma 2}. Let $U_{i}$ be the open strips defined in Lemma \ref{fragmentation lemma 2}. By Lemma \ref{fragmentation lemma 2} there exists $g_{1},\dots, g_{m}$ and $\theta$ in $\mathrm{Ham}(S^{2}(a) \times S^{2}(b))$ such that $g = g_{1}\circ \dots \circ g_{m}\circ \theta$,  $supp(g_{i})\subset \mathbb{D}^{2}(\frac{a}{2}) \times U_{i}$ and $\theta$ is supported in $\mathbb{D}^{2}(\frac{a}{2}) \times V_{N+1}$ where $V_{N+1}$ is disjoint union of topological disks with total area less than $\rho = \frac{1}{2m}$. Since the supports of $g_{i}$'s are disjoint they must commute. Hence, defining
$$f_{j} = \prod_{i \equiv j \ \mathrm{mod}(N)}\ g_{i},\ \ j= 1,\dots, N$$
$$f_{N+1}:= \theta$$
we must have, $g = f_{1}\circ \dots \circ f_{N+1}$. The support of $f_{j}$ for $j\in \{1,\dots, N\}$ lies in $\mathbb{D}^{2}(\frac{a}{2}) \times V_{j}$ where
$$V_{j}:= \bigsqcup_{i \equiv j \ \mathrm{mod}(N)}U_{i}.$$
The area of $V_{j}$ for all $j \in \{1,\dots, N\}$ is $\frac{m}{N}.\frac{1}{2m} = \frac{1}{2N} < \mathrm{area}(B)$ and $\mathrm{area}(V_{N+1}) < \mathrm{area}(B)$ as well. Therefore, by Lemma \ref{displacement lemma} there exist $h_{1},\dots,h_{N+1}\in \mathrm{Ham}(S^{2}(a) \times S^{2}(b))$ supported in $S^{2}(a) \times S^{2}(b) \backslash \{p\}$ such that 
$$h_{j}(S^{2}(a) \times V_{j}) \subset S^{2}(a) \times B,$$
moreover $\norm{h_{j}}_{H}\leq 2.\frac{1}{2m} = \frac{1}{m}$. Define the following Hamiltonian diffeomorphism,
$$\phi:= \prod_{j=1}^{N+1}h_{j}\circ f_{j}\circ h_{j}^{-1}.$$
Then, the support of $\phi$ lies inside the subset $S^{2}(a) \times B$ and we obtain,
$$d_{H}(g,\phi)\leq \sum_{j=1}^{N+1}d_{H}(f_{j},h_{j}f_{j}h_{j}^{-1}) \leq \sum_{j=1}^{N+1} 2\norm{h_{j}}_{H} \leq \frac{2(N+1)}{m} < \epsilon.$$
Note that in the first inequality we have used the bi-invariant property of the Hofer distance. namely, for Hamiltonian diffeomorphisms $a,b,c,d$ we have,
$$d_{H}(ab,cd)\leq d_{H}(ab, cb) + d_{H}(cb, cd) = d_{H}(a,c) + d_{H}(b,d).$$
\end{proof}

\section{Topology of the group $\mathrm{Ham}$}
\label{topology of Ham section}
One of the applications of our main theorem, see Theorem \ref{main theorem}, is proving $C^{0}$-continuity of some spectral invariants defined in \S\ref{spectral estimators section}. These spectral invariants are initially defined on $C^{\infty}(S^{2}(1) \times S^{2}(a))$ for some certain rational numbers $0 < a < 1$. They descend to the group $\widetilde{\mathrm{Ham}}(S^{2}(1) \times S^{2}(a))$. To get an invariant of a Hamiltonian diffeomorphism we need them to descend to the group $\mathrm{Ham}(S^{2}(1) \times S^{2}(a))$. This may not be the case, but it will be the case for $\mathrm{Ham}_{M_{a}}(S^{2}(1) \times S^{2}(a))$ where $M = \mathbb{D}^{2}(\frac{1}{2}) \times \mathbb{D}^{2}(\frac{a}{2})$. This follows from a theorem of Gromov which states that the group $\mathrm{Ham}_{c}(V)$ is contractible, in particular has trivial fundamental group, where $V \subset \mathbb{R}^{4}$ is a compact star-shaped domain, see \cite[Theorem 9.5.2]{jhol}. Therefore, in \S\ref{C^{0}-continuity section} we will have a well-defined invariant on the group $\mathrm{Ham}_{c}(\mathbb{D}^{2}(\frac{1}{2}) \times \mathbb{D}^{2}(\frac{a}{2}))$ which we prove that is uniformly $C^{0}$-continuous.

\begin{remark}
Following the lines of \cite{Eva11}, one can prove that the group $\mathrm{Ham}_{c}(S^{2}(a) \times \mathbb{D}^{2}(c))$ is also contractible, in particular has a trivial fundamental group, hence one has a well-defined invariant on this group by restricting the invariants defined in \S\ref{spectral estimators section}. Hence, by Remark \ref{rem:ext_lem_2} and Remark \ref{rem:hofer_app_ext} it is expected that the invariants defined on $\mathrm{Ham}_{c}(S^{2}(a) \times \mathbb{D}^{2}(c))$ are also uniformly $C^{0}$-continuous.
\end{remark}

\section{Lagrangian spectral estimators}
\label{spectral estimators section}
Let $M_{a} = S^{2}(1) \times S^{2}(a)$ where $0 < a < 1$ is a positive rational number. In \cite{PS23}, Polterovich and Shelukhin constructed new functionals on the space of time dependent Hamiltonian functions (spectral estimators), on the group of Hamiltonian diffeomorphisms (group estimators) and on the Lie algebra of functions on a symplectic manifold (algebra estimators), for the symplectic manifold $M_{a}$, that satisfy a number of remarkable properties. In this section we recall the existence theorem of their functional on the space of time dependent Hamiltonian functions (spectral estimators) and list their properties, see \cite[Section 2]{PS23} for more details. We will define a real-valued invariant $\tau_{k,k',B,B'}$ as the difference of two of the spectral estimators and we prove that its restriction to the Hamiltonian functions that are compactly supported in the subdomain $N_{a}:= \mathbb{D}^{2}(\frac{1}{2}) \times \mathbb{D}^{2}(\frac{a}{2})$ descends to the Hamiltonian group $\mathrm{Ham}_{c}(N_{a})$, where $\mathbb{D}^{2}(\frac{1}{2})$ and $\mathbb{D}^{2}(\frac{a}{2})$ are identified with the upper hemisphere of $S^{2}(1)$ and $S^{2}(a)$. In \S\ref{C^{0}-continuity section} we prove that the map
\[
\tau_{k,k',B,B'}: \mathrm{Ham}_{c}(N_{a})\rightarrow \mathbb{R}
\]
is uniformly $C^{0}$-continuous. 

\subsection{Existence of spectral estimators}
\label{existence of spectral estimators section}
Before we state the existence theorem of spectral estimators we need a few definition and notations. We shall follow the notation in \cite{PS23}. Let $z : S^{2}(1) \rightarrow \mathbb{R}$ be the height function on $S^{2}(1)$, where we think of $S^{2}(1)$ as the standard sphere with radius $\frac{1}{2}$ in $\mathbb{R}^{3}$ with the area form $\frac{1}{\pi}\omega$. Let $0 < C < B$ be two positive numbers and $k > 0$ be an integer such that $2B + (k-1)C = 1$. Denote $l^{0,j}_{k,B} := z^{-1}(-\frac{1}{2} + B + jC)$ for $j = 0,\dots, k-1$ and let $l^{0}_{k,B}$ be the union of the circles $l^{0,j}_{k,B}$. Let $a$ be a positive number that satisfies $0 < \frac{a}{2} < B-C$. Let $S$ be the equator of the factor $S^{2}(a)$ in $M_{a}$. Define the following subsets, 
\[
L_{k,B}:= l^{0}_{k,B}\times S
\]
\[
L^{j}_{k,B}:= l^{0,j}_{k,B}\times S,\ \ j = 0,\dots, k-1.
\]
Let us now recall an important invariant called the Calabi invariant,
\begin{definition}(Calabi invariant)
Let $(M,\omega)$ be an $2n$-dimensional symplectic manifold and $U\subset M$ be a displaceable open subset. The Calabi invariant is a map $Cal : \widetilde{\mathrm{Ham}_{c}(U)}\rightarrow \mathbb{R}$ defined by,
\[
Cal(\{\phi^{t}_{H}\}):= \int_{0}^{1}\int_{U}H_{t}\omega^{n}\ dt,
\]
where $H$ is a Hamiltonian supported in $[0,1]\times U$ and generates the class $[\{\phi^{t}_{H}\}]$. The Calabi invariant is a well-defined homomorphism.
\end{definition}

We are now ready to state the existence theorem of the spectral estimators,

\begin{theorem}\cite[Theorem F]{PS23}
\label{spectral estimators theorem}
Let $k,B,a$ be as above where $B,a$ are rational numbers, and let $M_{a} = S^{2}(1) \times S^{2}(a)$. There exist a map $c_{k,B}: C^{\infty}([0,1]\times M_{a})\rightarrow \mathbb{R}$ that satisfies the following properties:
\begin{itemize}
\item (Hofer-Lipschitz) For each $G,H \in C^{\infty}([0,1]\times M_{a})$ we have, 
\[
|c_{k,B}(G) - c_{k,B}(H)|\leq \int_{0}^{1}\max{|G_{t} - H_{t}|}dt.
\]
\item (Monotonicity) For $G,H \in C^{\infty}([0,1]\times M_{a})$ and $G \leq H$ we have
\[
c_{k,B}(G)\leq c_{k,B}(H).
\]
\item (Normalization) For $G \in C^{\infty}([0,1]\times M_{a})$ and $b \in C^{\infty}([0,1])$ we have
\[
c_{k,B}(G + b) = c_{k,B}(G) + \int_{0}^{1}b(t)\ dt.
\]
\item (Lagrangian control) If $H \in C^{\infty}([0,1]\times M_{a})$ and $H(t,-)_{|_{L^{j}_{k,B}}}\equiv c_{j}(t)\in \mathbb{R}$, then we have
\[
c_{k,B}(H) = \frac{1}{k}\underset{0\leq j< k}\sum \int_{0}^{1}c_{j}(t)\ dt.
\]
\item (Independence of Hamiltonian) The following function is well defined:
\[
\widetilde{\mathrm{Ham}}(M_{a})\rightarrow \mathbb{R}, \ \ \ [\widetilde{\phi}] \mapsto c_{k,B}(H),
\]
where $H$ is a mean zero Hamiltonian that generates the class $[\widetilde{\phi}]$. By mean zero we mean $\int_{M}H_{t}\omega^{n} = 0$ for all $t\in [0,1]$.
\item (Sub-additivity) For all $\widetilde{\phi},\widetilde{\psi} \in \widetilde{\mathrm{Ham}}(M_{a})$ we have,
\[
c_{k,B}(\widetilde{\phi}\widetilde{\psi}) \leq c_{k,B}(\widetilde{\phi}) + c_{k,B}(\widetilde{\psi}).
\]
\item (Calabi property) If $H \in C^{\infty}([0,1]\times M_{a})$ is supported in $[0,1]\times U$ for an open subset $U$ disjoint from $L_{k,B}$, then,
\[
c_{k,B}(\widetilde{\phi_{H}}) = -\frac{1}{vol(M_{a})}Cal(\widetilde{\phi_{H}}).
\]
\item (Controlled additivity) Let $\widetilde{\psi} \in \widetilde{\mathrm{Ham}}(M_{a})$ such that there is a Hamiltonian $H$ that is supported in $[0,1]\times U$ for an open subset $U$ disjoint from $L_{k,B}$, and generates $\widetilde{\psi}$. Then for all $\widetilde{\phi}\in \widetilde{\mathrm{Ham}}(M_{a})$ we have,
\[
c_{k,B}(\widetilde{\psi}\widetilde{\phi}) = c_{k,B}(\widetilde{\psi}) + c_{k,B}(\widetilde{\phi}).
\]
\end{itemize}
\end{theorem}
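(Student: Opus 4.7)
This theorem is a compendium of properties for the Lagrangian spectral estimator $c_{k,B}$, attributed to \cite{PS23}, so the paper will presumably just cite it. Sketching a construction from scratch: the plan is to realize $c_{k,B}(H)$ as a Lagrangian spectral number associated to the disconnected Lagrangian $L_{k,B}=l^{0}_{k,B}\times S \subset M_{a}$, a disjoint union of $k$ tori in $M_{a}=S^{2}(1)\times S^{2}(a)$. First I would verify that for the stated arithmetic conditions on $k,B,a$ the components $L^{j}_{k,B}$ are monotone Lagrangian tori with a common monotonicity constant; this is a direct area computation in the split symplectic structure, and this is where the hypotheses $2B+(k-1)C=1$ and $a/2<B-C$ are used. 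Then I would set up Lagrangian Floer cohomology $HF^{*}(L_{k,B},L_{k,B};\Lambda)$ over an appropriate Novikov field, with a local system (or bulk deformation) chosen so that a distinguished nonzero class $\alpha$ exists and has equal weight on each of the $k$ components.

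With $\alpha$ fixed, I would define $c_{k,B}(H)$ to be the spectral number $\ell(\alpha,H)$, i.e.\ the minimal action filtration level at which $\alpha$ appears in the Lagrangian Floer homology of $H$ relative to $L_{k,B}$. The Hofer-Lipschitz, monotonicity, normalization, sub-additivity, and independence-of-Hamiltonian properties then fall out of the standard continuation-map package for spectral invariants: Hofer-Lipschitz from comparing action filtrations under continuation between $G$ and $H$; monotonicity as a corollary; normalization from the fact that adding $b(t)$ shifts all actions by $\int_{0}^{1}b(t)\,dt$; sub-additivity from the triangle product on Floer homology; and descent to $\widetilde{\mathrm{Ham}}(M_{a})$ from the usual homotopy invariance of spectral numbers for mean-zero Hamiltonians. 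The Lagrangian control property reflects the specific choice of $\alpha$: when $H(t,\cdot)\equiv c_{j}(t)$ on $L^{j}_{k,B}$, the constant chords on component $L^{j}_{k,B}$ all carry action $\int_{0}^{1}c_{j}(t)\,dt$, and since $\alpha$ is an equally weighted sum across the $k$ components, the resulting spectral number is the average $\frac{1}{k}\sum_{j}\int_{0}^{1}c_{j}(t)\,dt$.

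The technically deepest steps are the Calabi property and controlled additivity. For Calabi, I would argue that for $H$ supported in $[0,1]\times U$ with $U$ disjoint from $L_{k,B}$ the Hamiltonian perturbation cannot create any new chords on $L_{k,B}$, so on Floer generators the only effect is the mean-value shift required to renormalize $H$ to mean zero before invoking the descent to $\widetilde{\mathrm{Ham}}$; this renormalization produces exactly the factor $-\mathrm{Cal}(\widetilde{\phi_{H}})/\mathrm{vol}(M_{a})$. Controlled additivity then follows by combining Calabi with sub-additivity applied in both directions (for $\widetilde{\psi}\widetilde{\phi}$ and for $\widetilde{\psi}^{-1}\cdot(\widetilde{\psi}\widetilde{\phi})$) and using that $c_{k,B}(\widetilde{\psi}^{-1})=-c_{k,B}(\widetilde{\psi})$ under the Calabi hypothesis. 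The main obstacle I expect is arranging the Floer-algebraic data — the Novikov weights, local coefficients, or bulk deformation — so that a single class $\alpha$ simultaneously satisfies the symmetry needed for Lagrangian control and remains non-zero in homology with the correct Calabi normalization; verifying that all eight listed properties hold for one and the same class $\alpha$ is the technical heart of \cite{PS23}.
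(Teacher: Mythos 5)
You correctly identify that the paper does not prove this theorem but imports it wholesale as a citation to \cite[Theorem F]{PS23}; there is no proof in the paper to compare against, and your opening observation is exactly right. Your subsequent sketch of the construction in \cite{PS23} is broadly faithful to what Polterovich and Shelukhin actually do: they build $c_{k,B}$ as a spectral invariant of the filtered Lagrangian Floer homology of the link $L_{k,B}$ of $k$ disjoint monotone tori, equipped with a suitable local system, and the soft properties (Hofer-Lipschitz, monotonicity, normalization, sub-additivity, independence of Hamiltonian) come from the standard continuation and pair-of-pants package, while the Lagrangian control, Calabi, and controlled additivity properties encode the specific symmetric choice of Floer class and the disjointness hypothesis exactly as you describe. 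Since this is an imported black box, nothing in your sketch is required for the present paper; the parts of Theorem \ref{spectral estimators theorem} that the paper actually uses downstream are Hofer-Lipschitz, Lagrangian control, Calabi, and controlled additivity, and your account of how those arise in the Floer-theoretic framework is consistent with \cite{PS23} at the level of detail you give.
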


We shall consider the restriction of the functions $c_{k,B}$ to the Hamiltonian functions that are compactly supported in $N_{a} = \mathbb{D}^{2}(\frac{1}{2}) \times \mathbb{D}^{2}(\frac{a}{2})$ where we identify $\mathbb{D}^{2}(\frac{1}{2})$ and $\mathbb{D}^{2}(\frac{a}{2})$ with the upper hemisphere of $S^{2}(1)$ and $S^{2}(a)$. By the "Independence of Hamiltonian" property of the spectral estimators, to show that they descend to the group $\mathrm{Ham}_{c}(N_{a})$, it is enough to know that the fundamental group of $\mathrm{Ham}_{c}(N_{a})$ is trivial which is the case, see \S\ref{topology of Ham section}. Finally, we have a well-defined map $c_{k,B}: \mathrm{Ham}_{c}(N_{a})\rightarrow \mathbb{R}$ that in particular satisfies the following  properties:

\begin{itemize}
\label{c_{k,B} properties}
\item (Hofer-Lipschitz) For $\phi, \psi\in \mathrm{Ham}_{c}(N_{a})$ we have
\[
|c_{k,B}(\phi) - c_{k,B}(\psi)| \leq \norm{\phi\psi^{-1}}_{h}.
\]
See Remark \ref{two hofer norms} for the definition of $\norm{.}_{h}$ and its relation with the other Hofer-norm defined in Definition \ref{hofer norm}.

\item (Calabi property) For $\phi \in \mathrm{Ham}_{c}(N_{a})$ that is supported in an open subset $U$ disjoint from $L_{k,B}$ we have,
\[
c_{k,B}(\phi) = -\frac{1}{vol(M_{a})}Cal(\widetilde{\phi}),
\]
where $\widetilde{\phi}$ is a lift of $\phi$ inside $\widetilde{\mathrm{Ham}_{c}}(U)$.
\item (Controlled additivity) Let $\psi \in \mathrm{Ham}_{c}(N_{a})$ be supported in an open subset $U$ disjoint from $L_{k,B}$ and let $\phi \in \mathrm{Ham}_{c}(N_{a})$ be any Hamiltonian diffeomorphism, then,
\[
c_{k,B}(\psi\phi) \overset{(*)}= c_{k,B}(\psi) + c_{k,B}(\phi)
\]
To see this, choose a lift $\widetilde{\psi\phi} \in \widetilde{\mathrm{Ham}_{c}}(N_{a})$ of $\psi\phi$ and let $H \in C^{\infty}([0,1]\times M_{a})$ be a mean  zero Hamiltonian supported in $[0,1]\times N_{a}$ that generates the chosen lift. Choose a lift $\widetilde{\psi}$ of $\psi$ in $\widetilde{\mathrm{Ham}_{c}}(U)$ and let $G\in C^{\infty}([0,1]\times M_{a})$ be supported in $[0,1]\times U$, have mean zero and generate the lift. Then, the Hamiltonian function $\overline{G} \# H$ will have mean zero and generates a lift of $\phi$ in $\widetilde{\mathrm{Ham}_{c}}(N_{a})$ which we call $\widetilde{\phi}$. Hence, by the "Controlled additivity property" in Theorem \ref{spectral estimators theorem} and the choices we made, the equality $(*)$ is implied. 
\end{itemize}

\subsection{\texorpdfstring{$C^{0}$}{Lg}-continuity of \texorpdfstring{$\tau_{k,k',B,B'}$}{Lg}}
\label{C^{0}-continuity section} 
Here, we prove the $C^{0}$-continuity of the invariant $\tau_{k,k',B,B'}: \mathrm{Ham}_{c}(N_{a})\rightarrow \mathbb{R}$ defined below. To prove this, we will be using our main theorem that we proved in \S\ref{hofer approximation section}, see Theorem \ref{main theorem}, the Hofer-Lipschitz property of $c_{k,B}$ invariants that was stated in \S\ref{existence of spectral estimators section}, see Properties \ref{c_{k,B} properties}, and the fact that $\tau$ is invariant under some specific perturbations, see Lemma \ref{invariance under perturbation} below.

\begin{definition}
\label{tau invariant}
Let $k,B, C$ and $k',B', C'$ be as above, $0 < \frac{a}{2} < \min\{B - C, B' - C'\}$, and $B,B',a$ be rational numbers. Let $M_{a}, N_{a}$ be defined as before. Then we define $\tau_{k,k',B,B'}: \mathrm{Ham}_{c}(N_{a})\rightarrow \mathbb{R}$ by the following:
\[
\tau_{k,k',B,B'}(\phi):= c_{k,B}(\phi) - c_{k',B'}(\phi).
\]
We will be omitting the indices unless necessary.
\end{definition}

To prove the $C^{0}$-continuity of the invariant $\tau$ in \S\ref{C^{0}-continuity section}, we use a property of the invariant proved in \cite[Theorem L]{PS23}, which for the convenience of the reader we write it in the lemma below,

\begin{lemma}
\label{invariance under perturbation}
Let $\tau = \tau_{k,k',B,B'}: \mathrm{Ham}_{c}(N_{a})\rightarrow \mathbb{R}$ be the invariant defined in Definition \ref{tau invariant}. Let $U\subset N_{a}$ be an open subset that is disjoint from $L_{k,B}$ and $L_{k',B'}$. Let $\phi \in \mathrm{Ham}_{U}(N_{a})$ be a Hamiltonian diffeomorphism of $N_{a}$ compactly supported in $U$ and let $\theta\in \mathrm{Ham}_{c}(N_{a})$ be any Hamiltonian diffeomorphism. Then, $\tau(\theta\phi) = \tau(\theta)$. In particular, $\tau(\phi) = 0$.
\end{lemma}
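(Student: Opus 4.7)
The plan is to establish the stronger additivity
\[
c_{k,B}(\theta\phi) = c_{k,B}(\theta) + c_{k,B}(\phi),
\]
together with the corresponding identity for $c_{k',B'}$, and then subtract to obtain $\tau(\theta\phi) = \tau(\theta) + \tau(\phi)$. The vanishing $\tau(\phi)=0$ is immediate from the Calabi property of the restricted estimators in \S\ref{existence of spectral estimators section}: since $U$ is disjoint from both $L_{k,B}$ and $L_{k',B'}$, one has $c_{k,B}(\phi) = -\mathrm{Cal}(\widetilde\phi)/\mathrm{vol}(M_a) = c_{k',B'}(\phi)$ for the lift $\widetilde\phi$ of $\phi$ in $\widetilde{\mathrm{Ham}_{c}}(U)$, and the two estimators therefore agree on $\phi$. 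Specialising $\theta=\mathrm{id}$ will recover the "in particular" statement.

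The key intermediate fact is the inverse identity $c_{k,B}(\phi^{-1}) = -c_{k,B}(\phi)$. Since $\phi^{-1}$ is supported in the same set $U$ as $\phi$, I would apply the Controlled Additivity property (third bullet after Theorem \ref{spectral estimators theorem}) with $\phi^{-1}$ on the left and $\phi$ on the right, obtaining $c_{k,B}(\mathrm{id}) = c_{k,B}(\phi^{-1}) + c_{k,B}(\phi)$; the Lagrangian Control property of Theorem \ref{spectral estimators theorem} applied to $H\equiv 0$ gives $c_{k,B}(\mathrm{id})=0$, so the identity follows.

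With the inverse identity in hand, the promised additivity follows from two complementary applications of Sub-additivity, passed to the group level using that $\pi_{1}(\mathrm{Ham}_{c}(N_{a}))=0$ as discussed in \S\ref{topology of Ham section}. One application directly yields $c_{k,B}(\theta\phi) \leq c_{k,B}(\theta) + c_{k,B}(\phi)$, while applying Sub-additivity to the decomposition $\theta = (\theta\phi)\phi^{-1}$ together with the inverse identity gives
\[
c_{k,B}(\theta) \leq c_{k,B}(\theta\phi) + c_{k,B}(\phi^{-1}) = c_{k,B}(\theta\phi) - c_{k,B}(\phi),
\]
which is the reverse inequality. Running the identical argument for $c_{k',B'}$ and subtracting produces $\tau(\theta\phi)=\tau(\theta)+\tau(\phi)=\tau(\theta)$, and the case $\theta=\mathrm{id}$ gives $\tau(\phi)=0$.

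The main conceptual obstacle, as I see it, is the asymmetry of Controlled Additivity: it is stated only with the compactly supported factor on the left, so it does not directly deliver an additivity for the right-multiplication $\theta\phi$. Bridging this asymmetry is exactly what the sub-additivity-plus-inverse trick accomplishes; no further Floer-theoretic input beyond the properties listed in \S\ref{existence of spectral estimators section} should be needed.
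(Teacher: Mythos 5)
Your proof is correct and reaches the conclusion by the same overall route as the paper: establish the additivity $c_{k,B}(\theta\phi) = c_{k,B}(\theta) + c_{k,B}(\phi)$ (and likewise for $c_{k',B'}$), observe via the Calabi property that $c_{k,B}(\phi) = c_{k',B'}(\phi)$, and subtract. The worthwhile difference is in how you justify the additivity step. The paper simply cites ``controlled additivity'' for the analogous equality, but the controlled additivity property as stated — both in Theorem~\ref{spectral estimators theorem} and in the restricted group-level version — places the $U$-supported factor on the \emph{left} of the product, whereas in $\theta\phi$ the controlled factor $\phi$ sits on the \emph{right}. You correctly flag this asymmetry and bridge it: first derive the inverse identity $c_{k,B}(\phi^{-1}) = -c_{k,B}(\phi)$ (either by controlled additivity applied to $\phi^{-1}\phi$ together with $c_{k,B}(\mathrm{id})=0$, or even more directly from the Calabi property, since $\mathrm{Cal}$ is a homomorphism), and then squeeze equality out of two applications of sub-additivity,
\[
c_{k,B}(\theta\phi) \leq c_{k,B}(\theta) + c_{k,B}(\phi), \qquad c_{k,B}(\theta) \leq c_{k,B}(\theta\phi) + c_{k,B}(\phi^{-1}) = c_{k,B}(\theta\phi) - c_{k,B}(\phi),
\]
both read at the group level, which is legitimate since $\pi_1(\mathrm{Ham}_c(N_a))=0$ gives unique lifts. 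This makes explicit a right-sided controlled additivity that the paper's proof invokes without comment; it is a genuine refinement of the exposition, though the underlying argument is the same.
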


\begin{proof}
\[
\tau(\theta\phi) = c_{k,B}(\theta\phi) - c_{k',B'}(\theta\phi)
\]
\[
\overset{(1)}= \big(c_{k,B}(\theta) + c_{k,B}(\phi)\big) - \big(c_{k',B'}(\theta) + c_{k',B'}(\phi)\big)
\]
\[
\overset{(2)}= c_{k,B}(\theta) - c_{k',B'}(\theta) = \tau(\theta).
\]
The equality $(1), (2)$ follow from the controlled additivity and Calabi properties of both $c_{k,B}, c_{k',B'}$ respectively. (See the Properties \ref{c_{k,B} properties}.) For the last part it is enough to set $\theta = id$.
\end{proof}

\begin{theorem}($C^{0}$-Continuity)
\label{C^{0}-continuity of tau}
The invariant $\tau_{k,k',B,B'}: \mathrm{Ham}_{c}(N_{a})\rightarrow \mathbb{R}$ is $C^{0}$-continuous.
\end{theorem}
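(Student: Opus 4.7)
The plan is to exploit the Hofer-Lipschitz continuity of each spectral estimator $c_{k,B}$, combined with the observation that $\tau = c_{k,B} - c_{k',B'}$ vanishes on any Hamiltonian diffeomorphism supported away from both Lagrangian configurations $L_{k,B}$ and $L_{k',B'}$, since the Calabi expression $-\frac{1}{vol(M_a)}Cal(\cdot)$ does not depend on the parameters $k,B$. Concretely, given $\phi, \psi \in \mathrm{Ham}_{c}(N_a)$ with $d_{C^0}(\phi,\psi)$ small, I set $g := \phi\psi^{-1}$ so that $d_{C^0}(g, id)$ is small by right-invariance, and then apply Theorem \ref{main theorem} to Hofer-approximate $g$ by a Hamiltonian diffeomorphism $\psi_0$ supported in an open set disjoint from $L_{k,B} \cup L_{k',B'}$. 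The upshot is that $\tau(\phi)$ is Hofer-close to $\tau(\psi_0\psi)$, while controlled additivity together with the Calabi property force $\tau(\psi_0\psi) = \tau(\psi)$.

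To set up the approximation, fix $\epsilon > 0$ and choose a topological disk $B_0 \subset \mathbb{D}^{2}(\frac{a}{2}) \subset S^{2}(a)$ that is disjoint from the equator $S$ of $S^{2}(a)$. Then $U_0 := \mathbb{D}^{2}(\frac{1}{2}) \times B_0$ is an open subset of $N_a$, and it is disjoint from both $L_{k,B} = l^{0}_{k,B} \times S$ and $L_{k',B'} = l^{0}_{k',B'} \times S$. Theorem \ref{main theorem} (applied with first factor of area $1$ and second factor of area $a$) provides $\delta > 0$ such that any $g \in \mathrm{Ham}_{c}(N_a)$ with $d_{C^0}(g, id) < \delta$ admits an approximation $\psi_0 \in \mathrm{Ham}(M_a)$ with support in $S^{2}(1) \times B_0$ satisfying $d_H(g, \psi_0) < \epsilon/2$. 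Inspecting the proof of Theorem \ref{main theorem}, the conjugating diffeomorphisms $h_j$ act only on the second factor, so in fact the support of $\psi_0$ is contained in $\mathbb{D}^{2}(\frac{1}{2}) \times B_0 = U_0$; thus $\psi_0 \in \mathrm{Ham}_{U_0}(N_a) \subset \mathrm{Ham}_{c}(N_a)$.

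Given any $\phi, \psi \in \mathrm{Ham}_{c}(N_a)$ with $d_{C^0}(\phi, \psi) < \delta$, let $g := \phi\psi^{-1}$ and pick $\psi_0$ as above. Controlled additivity, applied to both $c_{k,B}$ and $c_{k',B'}$ with the supported-away factor $\psi_0$ on the left, yields $\tau(\psi_0\psi) = \tau(\psi_0) + \tau(\psi)$, while the Calabi property gives $c_{k,B}(\psi_0) = -\frac{1}{vol(M_a)}Cal(\widetilde{\psi_0}) = c_{k',B'}(\psi_0)$, so $\tau(\psi_0) = 0$ and hence $\tau(\psi_0\psi) = \tau(\psi)$. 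Applying the Hofer-Lipschitz property to each spectral estimator and using $\norm{\cdot}_{h}\leq \norm{\cdot}_{H}$, we obtain
\[
|\tau(\phi) - \tau(\psi)| = |\tau(\phi) - \tau(\psi_0\psi)| \leq 2\norm{g\psi_0^{-1}}_{h} \leq 2 d_H(g, \psi_0) < \epsilon.
\]
Since $\delta$ depends only on $\epsilon$ (and not on $\phi, \psi$), this establishes uniform $C^0$-continuity. The main subtlety is ensuring that $\psi_0$ lands inside $\mathrm{Ham}_{c}(N_a)$ so that the controlled additivity and Calabi properties of $c_{k,B}$ as stated in \S\ref{existence of spectral estimators section} apply verbatim; this is handled by the support refinement extracted from the proof of Theorem \ref{main theorem} noted above. (Alternatively, one could argue in the universal cover $\widetilde{\mathrm{Ham}}(M_a)$ using appropriate lifts chosen so that both the supported-away structure and the small-Hofer-norm property are preserved.)
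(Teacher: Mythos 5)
Your proof is correct and follows essentially the same strategy as the paper: Hofer-approximate the $C^0$-small perturbation via Theorem \ref{main theorem} by a diffeomorphism $\psi_0$ supported away from $L_{k,B}\cup L_{k',B'}$, observe (via controlled additivity and the Calabi property, which is precisely Lemma \ref{invariance under perturbation}) that such a $\psi_0$ does not change $\tau$, and then use the Hofer-Lipschitz property. The only cosmetic difference is that the paper takes the approximating disk $B\subset\mathbb D^2(\tfrac12)$ in the first factor disjoint from $l^0_{k,B}\cup l^0_{k',B'}$ and ``flips'' the factors in applying Theorem \ref{main theorem}, whereas you take $B_0\subset\mathbb D^2(\tfrac a2)$ in the second factor away from the equator $S$ and apply Theorem \ref{main theorem} directly; both choices produce $U$ disjoint from $L_{k,B}\cup L_{k',B'}$, and both rely on the same support refinement (read off from the proof of Theorem \ref{main theorem}, where the conjugating maps $h_j$ act only on the second factor, giving support in $\mathbb D^2\times B$ rather than merely $S^2\times B$), which you make explicit and the paper leaves implicit.
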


\begin{proof}
Let $\epsilon > 0$ be a given positive number and $B$ be an open topological disk in $\mathbb{D}^{2}(\frac{1}{2})$ that is disjoint from $l^{0}_{k,B}$ and $l^{0}_{k',B'}$, and define $U:= B \times \mathbb{D}^{2}(\frac{a}{2})$. Let $\delta > 0$ be given by Theorem \ref{main theorem} for $\epsilon/2$ and $B$. (Notice that the factors are flipped compare to Theorem \ref{main theorem}.) Let $\theta \in \mathrm{Ham}_{c}(N_{a})$ be any Hamiltonian diffeomorphism. Define the following $C^{0}$-neighborhood of $\theta$,
\[
\mathcal{V}_{\delta}(\theta):= \{\theta\phi : d_{C^{0}}(\phi, id) < \delta\}.
\]
Choose any element $\theta\phi \in \mathcal{V}_{\delta}(\theta)$ and let $\psi\in \mathrm{Ham}_{c}(N_{a})$ be the Hamiltonian supported in $U$ that is given by the Theorem \ref{main theorem} for $\phi$, then we have,
\[
|\tau(\theta \phi) - \tau(\theta)| = |\tau(\theta\phi) - \tau(\theta\psi)|
\]
\[
\leq 2\norm{(\theta\phi)(\theta\psi)^{-1}}_{h} = 2 \norm{\phi\psi^{-1}}_{h}\leq 2 \norm{\phi\psi^{-1}}_{H} \leq 2\frac{\epsilon}{2} = \epsilon.
\]
The first equality is proved in Lemma \ref{invariance under perturbation}, the first inequality is followed by the Hofer-Lipschitz property of both $c_{k,B}, c_{k',B'}$, see Properties \ref{c_{k,B} properties}, the equality afterwards is by the conjugation invariance of the Hofer norm, and for the inequality between two different Hofer norms see Remark \ref{two hofer norms}.
\end{proof}

Note that the same statement holds for $N_{c,a}:= \mathbb{D}^{2}(c) \times \mathbb{D}^{2}(a)$ for any $0 < c,a < 1$ as soon as an invariant $\tau_{k,k',B,B'}$ is well-defined.

\section{Applications}

\subsection{\texorpdfstring{$C^{0}$}{Lg}-open sets in the complement of Hofer balls}
\label{C^{0}-open in complement of hofer ball}
In this section we show a simple application of our $C^{0}$-continuity result to the following question which was initially posed by Le Roux in \cite{LR10}:
\begin{question}
Let $(M,\omega)$ be a symplectic manifold and let $\mathrm{Ham}(M,\omega)$ be the group of compactly supported Hamiltonian diffeomorphisms of $M$. Let $A > 0$ be a fixed positive number and $d_{H}$ be the Hofer metric on the group, see Definition \ref{hofer norm}. Define the following subset of $\mathrm{Ham}(M,\omega)$,
\[
E_{A}(M,\omega):= \{\phi \in \mathrm{Ham}(M,\omega) : d_{H}(\phi, id) > A\}.
\]
Does the set $E_{A}(M,\omega)$ have a non-empty $C^{0}$ interior $?$
\end{question}

Here we consider the symplectic manifold $\mathbb{D}^{2}(\frac{1}{2}) \times \mathbb{D}^{2}(\frac{a}{2})$ where $0 < a < 1$ is any rational number.

\begin{theorem}
\label{c^{0}-open set in complement of hofer ball theorem}
Let $N_{a} = \mathbb{D}^{2}(\frac{1}{2}) \times \mathbb{D}^{2}(\frac{a}{2})$ where $0 < a < 1$ is any rational number. Then the set $E_{A}(N_{a})$ has non-empty $C^{0}$-interior for every $A > 0$.
\end{theorem}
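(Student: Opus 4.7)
The plan is to exploit the $C^{0}$-continuity of $\tau = \tau_{k,k',B,B'}$ from Theorem \ref{C^{0}-continuity of tau} together with its Hofer-Lipschitz behaviour inherited from the estimators $c_{k,B}$ and $c_{k',B'}$. First I would record the key bound
\[
|\tau(\phi) - \tau(\psi)| \leq 2\, d_H(\phi,\psi), \qquad \phi, \psi \in \mathrm{Ham}_c(N_a),
\]
which follows from the Hofer-Lipschitz property listed in Properties \ref{c_{k,B} properties} applied to both $c_{k,B}$ and $c_{k',B'}$, together with the comparison $\norm{\cdot}_h \leq \norm{\cdot}_H$ of Remark \ref{two hofer norms}. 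Since $c_{k,B}(\mathrm{id}) = 0 = c_{k',B'}(\mathrm{id})$ (equivalently, the last sentence of Lemma \ref{invariance under perturbation}), this specialises to $|\tau(\phi)| \leq 2\, \norm{\phi}_H$ for every $\phi \in \mathrm{Ham}_c(N_a)$.

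Granted this estimate, the theorem reduces to exhibiting, for the given $A > 0$, a single element $\phi_A \in \mathrm{Ham}_c(N_a)$ (with admissible parameters $k, k', B, B'$) satisfying $|\tau(\phi_A)| > 3A$. Indeed, applying the $C^{0}$-continuity of $\tau$ at $\phi_A$ furnishes $\delta > 0$ such that $d_{C^{0}}(\psi, \phi_A) < \delta$ forces $|\tau(\psi) - \tau(\phi_A)| < A$ and hence $|\tau(\psi)| > 2A$; the Lipschitz bound above then gives $\norm{\psi}_H > A$, i.e.\ $\psi \in E_A(N_a)$. Thus the $C^{0}$-ball $B_{C^{0}}(\phi_A, \delta)$ sits inside $E_A(N_a)$, proving non-empty $C^{0}$-interior.

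The main obstacle is therefore the production of $\phi_A$ with $|\tau(\phi_A)|$ arbitrarily large. The natural route is via the Lagrangian control property listed in Theorem \ref{spectral estimators theorem}: choose distinct data $(k,B) \neq (k',B')$ so that the latitude families $l^{0}_{k,B}$ and $l^{0}_{k',B'}$ on $S^{2}(1)$ sit at genuinely different heights, and construct an autonomous Hamiltonian $H$ on $M_a$ which, after an appropriate cut-off in the second factor that keeps the time-one flow $\phi_H$ inside $\mathrm{Ham}_c(N_a)$, takes prescribed constant values on the tori $L^{j}_{k,B}$ and $L^{j'}_{k',B'}$. Lagrangian control then evaluates $c_{k,B}(\phi_H)$ and $c_{k',B'}(\phi_H)$ as two weighted averages of the form $\frac{1}{k}\sum_{j} c_j$ and $\frac{1}{k'}\sum_{j'} c'_{j'}$, and rescaling $H$ can make the difference of these two averages as large as desired. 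Reconciling this prescription with the compact-support condition inside $N_a$, by placing the height bumps close to but away from the boundary and adjusting parameters accordingly, is the principal technical point of the construction.
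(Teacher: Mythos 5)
Your overall strategy mirrors the paper's exactly: use $C^{0}$-continuity of $\tau$ (Theorem \ref{C^{0}-continuity of tau}) to find a $C^{0}$-ball around some $\phi_{A}$ with $|\tau(\phi_{A})|$ large, and the Hofer--Lipschitz bound $|\tau| \leq 2\norm{\cdot}_{H}$ to conclude that ball lies in $E_{A}(N_{a})$. You are right that the sole remaining burden is producing $\phi_{A}$ with $|\tau(\phi_{A})|$ unbounded; the paper simply asserts the existence of such a $\phi$ without argument.

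However, the Lagrangian-control construction you sketch cannot work on $N_{a} = \mathbb{D}^{2}(\tfrac12)\times\mathbb{D}^{2}(\tfrac a2)$ as written, and this exposes a real gap which is also present in the paper's proof. Each torus $L^{j}_{k,B} = l^{0,j}_{k,B}\times S$ lies over the equator $S$ of $S^{2}(a)$, while the second factor $\mathbb{D}^{2}(\tfrac a2)$ of $N_{a}$ is precisely the open upper hemisphere, whose interior is disjoint from $S$. Thus $N_{a}$ itself is disjoint from $L_{k,B}$ and $L_{k',B'}$, and every $\phi\in\mathrm{Ham}_{c}(N_{a})$ has compact support inside a displaceable open $U\subset N_{a}$ that is disjoint from both Lagrangian families. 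Lemma \ref{invariance under perturbation} then applies with $U = N_{a}$ and $\mathrm{Ham}_{U}(N_{a}) = \mathrm{Ham}_{c}(N_{a})$, and its last sentence gives $\tau(\phi) = 0$ for \emph{every} $\phi\in\mathrm{Ham}_{c}(N_{a})$. Equivalently, any generating Hamiltonian supported in $N_{a}$ vanishes identically on each $L^{j}_{k,B}$, so Lagrangian control together with Normalization forces $c_{k,B}(\phi) = -\tfrac{1}{\mathrm{vol}(M_{a})}\mathrm{Cal}(\tilde\phi) = c_{k',B'}(\phi)$. The tension you flag---``reconciling this prescription with the compact-support condition inside $N_{a}$''---is therefore not a technical nuisance one can engineer around by pushing bumps toward the boundary: as long as the support stays in the open upper hemisphere, the values on $L^{j}_{k,B}$ are forced to be zero.

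The fix, implicit in the proof of Theorem \ref{big flat in Ham theorem}, is to enlarge the second factor to a disk $\mathbb{D}^{2}(a')$ with $a' > \tfrac a2$, so that the equator $S$ lies in its interior and a cut-off can be taken $\equiv 1$ on a neighbourhood of $S$. Then a Hamiltonian of the form $\beta(y)\,h^{\#}(z(x))$ takes prescribed nonzero values on the tori, Lagrangian control yields $|\tau|$ unbounded, and your argument closes. So your proposal is structurally correct and coincides with the paper's route, but the key construction of $\phi_{A}$ requires the enlarged domain; as stated (with exactly the upper hemisphere as second factor), neither your proof nor the paper's can produce the required $\phi_{A}$.
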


\begin{proof}
Let $A > 0$ be a positive number. Let $B,B' > a$ be some rational numbers and let $C, k, C', k'$ be some positive numbers that satisfy the assumptions of Definition \ref{tau invariant}. Consider the functional $\tau_{k,k',B,B'}$, which was proved to be $C^{0}$-continuous in Theorem \ref{C^{0}-continuity of tau}. Let $\phi \in \mathrm{Ham}(N_{a})$ be a Hamiltonian diffeomorphism that satisfies $|\tau(\phi)| > 2A + 1$. Now consider the $C^{0}$-ball around $\phi$ with radius $\delta > 0$, $B_{C^{0}}(\phi, \delta)$, where $\delta$ is such that, if $d_{C^{0}}(\psi, \phi) < \delta$ then 
\[
|\tau(\psi) - \tau(\phi)| < 1.
\]
Now for every $\psi \in B_{C^{0}}(\phi, \delta)$ we have,
\[
d_{H}(\psi, id) \geq \frac{1}{2}|\tau(\psi)| > \frac{1}{2}(2A) = A.
\]
Therefore we have,
\[
B_{C^{0}}(\phi, \delta)\subset E_{A}(N_{a}).
\]
Here, the area $\frac{1}{2}$ of the disk is irrelevant and it could be any positive number $c \in (0,1]$.
\end{proof}

\subsection{Infinite dimensional flats in the group $\mathrm{Ham}$}
\label{big flat in Ham}
In this section we answer the question of whether one can isometrically embed a flat space into $\mathrm{Ham}_{c}(N_{r,s})$ equipped with the Hofer distance for some certain real parameters $r,s > 0$ where $N_{r,s} = \mathbb{D}^{2}(r) \times \mathbb{D}^{2}(s)$.

\begin{theorem}
\label{big flat in Ham theorem}
\sloppy The space $(C^{\infty}_{c}(0,b), d_{C^{0}})$ isometrically embeds into $(\mathrm{Ham}_{c}(N_{b',a'}), d_{H})$ where $a' < 1$ is any positive number satisfying  $b < \frac{1}{6}(1 - a')$ and $b'$ satisfies $\frac{1}{2} + b < b' < 1$. Here, $d_{C^{0}}$ is the $C^{0}$-distance and $d_{H}$ is the Hofer distance.
\end{theorem}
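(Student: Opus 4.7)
The plan is to mimic the big-flat construction of \cite{PS23} using the $C^{0}$-continuous invariant $\tau_{k,k',B,B'}$ built in \S\ref{spectral estimators section}. I would construct the embedding $\Phi: C^{\infty}_{c}(0,b) \to \mathrm{Ham}_{c}(N_{b',a'})$ as follows. Let $p: \mathbb{D}^{2}(b') \to [0,b']$ be the symplectic radial coordinate whose level sets are the horizontal circles bounding sub-disks of area $p(x)$; the hypothesis $\tfrac{1}{2}+b < b'$ guarantees that $(0,b)$ identifies with a sub-interval of $(0,b')$ with definite room at both ends. For $f \in C^{\infty}_{c}(0,b)$ set $H_{f}(x,y) := f(p(x))$, autonomous and compactly supported in $N_{b',a'}$, and put $\Phi(f) := \phi^{1}_{H_{f}}$. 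Since all such $H_{f}$'s depend only on $p(x)$ they pairwise Poisson-commute, making $\Phi$ a group homomorphism. Because $d_{C^{0}}$ and $d_{H}$ are both translation-invariant, proving the theorem reduces to showing $\norm{\phi_{h}}_{H} = \norm{h}_{C^{0}}$ for every $h = f-g \in C^{\infty}_{c}(0,b)$.

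For the upper bound $\norm{\phi_{h}}_{H} \leq \norm{h}_{C^{0}}$, the naive autonomous path only yields $\leq \max h - \min h$, which is too weak when $h$ changes sign. The refinement exploits the extra room provided by $b < \tfrac{1}{6}(1-a')$: decomposing $h = h^{+} - h^{-}$ with disjoint supports in $(0,b)$, one builds a non-autonomous Hamiltonian that realises the two commuting factors $\phi_{h^{+}}$ and $\phi_{-h^{-}}$ sequentially within disjoint spatial copies inside $\mathbb{D}^{2}(b')$, arranged so that at each time the oscillation of the generator is at most $\norm{h}_{C^{0}}$.

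For the lower bound $\norm{\phi_{h}}_{H} \geq \norm{h}_{C^{0}}$, I would apply the spectral estimator $\tau_{k,k',B,B'}$ with $k = k' = 1$. The single Lagrangian circles $l^{0,0}_{1,B}$ and $l^{0,0}_{1,B'}$ on $S^{2}(1)$ can be positioned at any prescribed heights by varying $B$ and $B'$; choosing $B$ so that $l^{0,0}_{1,B}$ sits at a prescribed point $t_{*}$ of the support of $h$ (possible by the Lagrangian-placement room provided by the height-constraints on the radii) and $B'$ so that $l^{0,0}_{1,B'}$ lies outside $\mathrm{supp}(H_{h})$, the Lagrangian control property of Theorem~\ref{spectral estimators theorem}, combined with the cancellation of the mean-value shift in the difference $\tau$, gives
\[
\tau_{1,1,B,B'}(\phi_{h}) \;=\; h(t_{*}) - 0 \;=\; h(t_{*}).
\]
Taking the supremum over admissible $t_{*}$ and invoking the Hofer-Lipschitz property of the $c_{k,B}$'s in the $\norm{\cdot}_{h}$-norm together with the comparison in Remark~\ref{two hofer norms} yields the matching lower bound $\norm{\phi_{h}}_{H} \geq \norm{h}_{C^{0}}$.

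The main obstacle is to match the constants so that the two bounds close up to an exact isometry rather than a merely bi-Lipschitz equivalence: the naive autonomous construction a priori loses a factor of $2$ in each direction, and recovering isometry requires a coordinated, careful choice of both the Hamiltonian path (for the upper bound) and of the spectral probes (for the lower bound). The quantitative constraints $b < \tfrac{1}{6}(1-a')$ and $\tfrac{1}{2}+b < b' < 1$ are precisely what provide the geometric room to do both simultaneously: enough spatial room inside $\mathbb{D}^{2}(b')$ to deploy disjoint supports for the refined Hamiltonian path, and enough height on $S^{2}(1)$ to place the Lagrangian circles freely inside and outside $\mathrm{supp}(H_{h})$.
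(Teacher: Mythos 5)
Your overall strategy—build the flat from a torus-invariant Hamiltonian on the first factor and match a Hofer upper bound against a Lagrangian-control lower bound—is indeed the paper's strategy, but both halves of your argument contain gaps that would prevent the constants from closing up.

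\textbf{Upper bound.} You are right that the naive autonomous path only gives $\max h - \min h$, but your proposed fix does not actually improve this. The positive and negative parts $h^{+},h^{-}$ of a smooth $h$ are in general not smooth and not disjointly supported, and more importantly, running $\phi_{h^{+}}$ and $\phi_{-h^{-}}$ sequentially (say on $[0,\tfrac12]$ with generator $2h^{+}$ and on $[\tfrac12,1]$ with generator $-2h^{-}$) gives total cost $\max h^{+} + \max h^{-} = \max h - \min h$, exactly the naive bound. Deploying them in disjoint spatial copies and running simultaneously likewise gives oscillation $\max h^{+} + \max h^{-}$. The device the paper actually uses is entirely different: one extends $h$ to an \emph{antisymmetric} function $h^{\#}$ on $[-\tfrac12,\tfrac12]$ (reflecting $h$ evenly across $0$ and oddly across $\pm b$, so that $h^{\#}$ has zero mean and $\max |h^{\#}| = \norm{h}_{C^{0}}$), and takes $\Gamma(h)(x,y)=\beta(y)\,h^{\#}(z(x))$ with $\beta$ a cutoff in $y$. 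Because $\Gamma(h)$ is mean-zero, the alternative Hofer norm $\norm{\cdot}_{h}$ of Remark~\ref{two hofer norms} (infimum of $\int\max|H_{t}|\,dt$ over mean-zero generators) is bounded by $\max|\Gamma(h)| = \norm{h}_{C^{0}}$, giving the tight upper bound. Without the antisymmetrisation and without switching to $\norm{\cdot}_{h}$, you lose a factor of $2$ that no rearrangement of the flow will recover.

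\textbf{Lower bound.} Your choice $k=k'=1$ is degenerate. The constraint $2B+(k-1)C=1$ with $k=1$ forces $B=\tfrac12$, so the single Lagrangian circle $l^{0,0}_{1,B}$ sits at $z=-\tfrac12+B=0$, the equator, and cannot be placed at a prescribed height $t_{*}$ inside the support of $h$; varying $B$ is not an available degree of freedom for $k=1$. The paper takes $k=2$: then $C=1-2B$ and the two circles sit at heights $\pm(\tfrac12 - B)$, and choosing $B_{i}=\tfrac12 - x_{i}$ places them at $\pm x_{i}$. It is precisely here that the antisymmetric extension is needed a second time: the Lagrangian control property gives $c_{2,B_{i}} = \tfrac12\bigl(h^{\#}(x_{i}) + h^{\#}(-x_{i})\bigr) = h(x_{i})$, which converges to $\norm{h}_{C^{0}}$ as $x_{i}\to x_{0}$. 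A single estimator $c_{2,B_{i}}$ suffices for this; the difference $\tau_{k,k',B,B'}$ and the second ``off-support'' probe you introduce are not needed. (They are needed for the $C^{0}$-continuity results earlier in the paper, but not here, where only the Hofer-Lipschitz and Lagrangian-control properties of a single $c_{k,B}$ are used.) Finally, your $H_{f}(x,y)=f(p(x))$ lacks the $y$-cutoff $\beta$, so its time-one map is not compactly supported in the interior of $N_{b',a'}$, and you do not discuss the descent from $\widetilde{\mathrm{Ham}}$ to $\mathrm{Ham}_{c}$ via the vanishing of $\pi_{1}(\mathrm{Ham}_{c}(N_{b',a'}))$, which is needed since the estimators are a priori only defined on the universal cover.
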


\begin{proof}
Let $h\in C^{\infty}_{c}(0,b)$ and define $h^{\#}: [-\frac{1}{2}, \frac{1}{2}]\rightarrow \mathbb{R}$ as follows:
\[
h^{\#}(z):= \begin{cases}
h(z) & z\in (0,b) \\
-h(2b - z) & z\in (b, 2b) \\ 
h(-z) & z \in (-b, 0)\\
-h(2b + z) & z \in (-2b, -b)\\
0 & o.w,
\end{cases}
\]
see Figure \ref{twist maps}. Let $z: S^{2}(1)\rightarrow [-\frac{1}{2}, \frac{1}{2}]$ be the normalized moment map of the natural Hamiltonian action of $S^{1}$ on $S^{2}(1)$. (When $S^{2}$ is the standard sphere in $\mathbb{R}^{3}$ with radius $\frac{1}{2}$ and area form $\frac{1}{\pi}\omega$, the map $z$ is just the height function.) Let $0 < a < 1$ be a rational number slightly bigger than $a'$ so that it satisfies $\frac{a}{2} < a' < a$ and $b < \frac{1}{6}(1 - a)$. Let $\beta: S^{2}(a) \rightarrow \mathbb{R}$ be a smooth cut off function which on $\mathbb{D}^{2}(a')$ is radial and is $1$ on $\{r \leq R - \epsilon\}$, is $0$ on $\{r \geq R - \frac{\epsilon}{2}\}$ and vanishes on $S^{2}(a) \backslash \mathbb{D}^{2}(a')$. Here, $R$ is the radius of $\mathbb{D}^{2}(a')$, $\epsilon > 0$ is a small number depending on $a, a'$ so that $\mathbb{D}^{2}(\frac{a}{2}) \subset \{r \leq R - \epsilon\}$. Set $\Gamma(h): S^{2}(1) \times S^{2}(a) \rightarrow \mathbb{R}$ by $(x,y)\mapsto \beta(y) h^{\#}(z(x))$.

Define the following homomorphism:
\[
\Psi : C^{\infty}_{c}(0,b)\rightarrow \mathrm{Ham}_{c}(N_{b',a'})
\]
\[
h\mapsto \phi^{1}_{\Gamma(h)},
\]
where $\phi^{1}_{\Gamma(h)}$ is the restriction of the time-one map of the autonomous Hamiltonian $\Gamma(h)$ of $M_{a}$ to the subdomain $N_{b',a'}$. We will argue as in the proof of \cite[Theorem A]{PS23}. Let $\widetilde{\Psi}: C^{\infty}_{c}(0,b)\rightarrow \widetilde{\mathrm{Ham}_{c}}(N_{b',a'})$ be the lift of $\Psi$ that takes $h$ to the homotopy class $[\{\phi^{t}_{\Gamma(h)}\}]$. Then, we have,
\[
\norm{\widetilde{\Psi}(h)}_{h} \leq \int_{0}^{1}\max|\Gamma(h)|\ dt = \norm{h}_{C^{0}},
\]
where $\norm{.}_{h}$ is defined as the infimum of the Hofer norm where the infimum is taken over all mean zero Hamiltonian functions generating the same homotopy class, see Remark \ref{two hofer norms}. Note that the Hamiltonian $\Gamma(h)$ is mean zero. To finish the proof we shall prove that the reverse inequality also holds. Since, the map $\Psi$ is a homomorphism and the Hofer norm is invariant under the inverse operation, without loss of generality, we assume that $\norm{h}_{C^{0}} = h(x_{0}) > 0$ for some $x_{0}\in (0,b)$. For $a,b$ as in the statement we consider the invariant $c_{2,B_{i}}: \widetilde{\mathrm{Ham}_{c}}(N_{b',a'}) \rightarrow \mathbb{R}$ from Theorem \ref{spectral estimators theorem} where $B_{i} = \frac{1}{2} - x_{i}$ and $\{x_{i}\}_{i\geq 1}$ is an increasing sequence of rational numbers converging to $x_{0}$ and lie in $(0,b)$, see Figure \ref{twist maps}. 

\begin{figure}[ht]
\labellist
\small\hair 2pt
\pinlabel ${1/2 - x_{1}}$ at 150 180
\pinlabel ${1/2 - x_{2}}$ at 143 170
\pinlabel $0$ at 70 150
\pinlabel $h$ at 35 115 
\pinlabel $h$ at 35 100
\pinlabel $-h$ at 97 135 
\pinlabel $0$ at 5 105
\pinlabel $-h$ at 97 78
\pinlabel $0$ at 70 60
\pinlabel $b$ at 140 145
\pinlabel $2b$ at 15 145
\pinlabel $b$ at 7 125
\pinlabel $x_{0}$ at 350 123
\pinlabel $h(x_{0})$ at 183 130
\pinlabel $0$ at 135 105
\pinlabel $h(x_{0})$ at 183 90
\pinlabel $-b$ at 140 70
\pinlabel $-b$ at 5 87
\pinlabel $-2b$ at 15 65
\endlabellist
\centering
\includegraphics{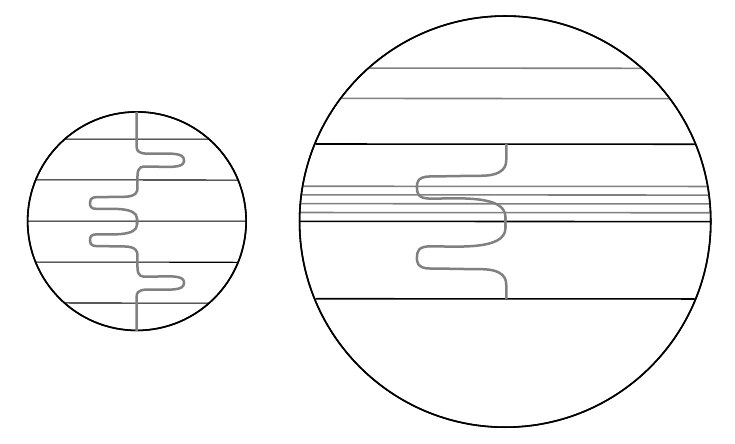}
\caption{$h^{\#}$ is on the left, an increasing sequence of rational levels converging to the level $x_{0}$ shown on the right picture.}
\label{twist maps}
\end{figure}

Note that, in order to have the invariant $c_{2,B_{i}}$ defined, we need the parameters to satisfy the following,
\[
\mathrm{if}\ C_{i} = 1 - 2B_{i} \implies \frac{a}{2} < B_{i} - C_{i},
\]
which hold since we have,
\[
b < \frac{1}{6}(1 - a) \implies \frac{a}{2} < \frac{1}{2} - 3b < \frac{1}{2} - 3x_{i} = B_{i} - C_{i}.
\]
By the Hofer-Lipschitz property of the invariant $c_{2,B_{i}}$ we have,
\[
c_{2,B_{i}}([\{\phi^{t}_{\Gamma(h)}\}]) \leq \norm{\widetilde{\Psi}(h)}_{h},
\]
and by the Lagrangian-control property we have,
\[
c_{2,B_{i}}([\{\phi^{t}_{\Gamma(h)}\}]) = \frac{1}{2}\big(h(\frac{1}{2} - B_{i}) + h(-\frac{1}{2} + B_{i})\big)
\]
\[
=  \frac{1}{2}\big(h(x_{i}) + h(-x_{i})\big) = h(x_{i}).
\]
So, we derive the following inequality:
\[
\norm{\widetilde{\Psi}(h)}_{h} \geq c_{2,B_{i}}([\{\phi^{t}_{\Gamma(h)}\}]) = h(x_{i})\ \ \text{for all}\ i\geq 1 \implies
\]
\[ 
\norm{\widetilde{\Psi}(h)}_{h} \geq \norm{h}_{C^{0}}.
\]
Therefore, the map $\widetilde{\Psi}$ is an isometric embedding, i.e. for all $h\in C^{\infty}_{c}(0,b)$ we have,
\[
\norm{\widetilde{\Psi}(h)}_{h} = \norm{h}_{C^{0}}.
\]
Since the space $\mathrm{Ham}_{c}(N_{b',a'})$ is a weakly contractible space, see \S\ref{topology of Ham section}, in particular it has a trivial fundamental group, hence the map $\Psi$ descends to an isometric embedding of $C^{\infty}_{c}(0,b)$ into $\mathrm{Ham}(N_{b',a'})$.
\end{proof}

\newpage
\bibliographystyle{abbrvnat}
\bibliography{main.bib} 

\begin{thebibliography}{20}
\providecommand{\natexlab}[1]{#1}
\providecommand{\url}[1]{\texttt{#1}}
\expandafter\ifx\csname urlstyle\endcsname\relax
  \providecommand{\doi}[1]{doi: #1}\else
  \providecommand{\doi}{doi: \begingroup \urlstyle{rm}\Url}\fi

\bibitem[Abreu(1996)]{Ab98}
M.~T. Abreu.
\newblock \emph{Topology of symplectomorphism groups of $S^{2}\times S^{2}$}.
\newblock Stanford University, 1996.

\bibitem[Abreu and McDuff(2000)]{Ab00}
M.~T. Abreu and D.~McDuff.
\newblock Topology of symplectomorphism groups of rational ruled surfaces.
\newblock \emph{Journal of the American Mathematical Society}, 13\penalty0
  (4):\penalty0 971--1009, 2000.

\bibitem[Buhovsky et~al.(2021)Buhovsky, Humili{\`e}re, and Seyfaddini]{BHS21}
L.~Buhovsky, V.~Humili{\`e}re, and S.~Seyfaddini.
\newblock The action spectrum and ${C}^{0}$-symplectic topology.
\newblock \emph{Mathematische Annalen}, 380\penalty0 (1-2):\penalty0 293--316,
  2021.

\bibitem[Cristofaro-Gardiner et~al.(2020)Cristofaro-Gardiner, Humili{\`e}re,
  and Seyfaddini]{GHS20}
D.~Cristofaro-Gardiner, V.~Humili{\`e}re, and S.~Seyfaddini.
\newblock Proof of the simplicity conjecture.
\newblock \emph{arXiv preprint arXiv:2001.01792}, 2020.

\bibitem[Cristofaro-Gardiner et~al.(2021)Cristofaro-Gardiner, Humili{\`e}re,
  and Seyfaddini]{GHS2}
D.~Cristofaro-Gardiner, V.~Humili{\`e}re, and S.~Seyfaddini.
\newblock {PFH} spectral invariants on the two-sphere and the large scale
  geometry of {H}ofer's metric.
\newblock \emph{arXiv preprint arXiv:2102.04404}, 2021.

\bibitem[Eliashberg and Polterovich(1993)]{EP93}
Y.~Eliashberg and L.~Polterovich.
\newblock Bi-invariant metrics on the group of {H}amiltonian diffeomorphisms.
\newblock \emph{Internat. J. Math}, 4\penalty0 (5):\penalty0 727--738, 1993.

\bibitem[Entov et~al.(2012)Entov, Polterovich, Py, and Khanevsky]{EPP12}
M.~Entov, L.~Polterovich, P.~Py, and M.~Khanevsky.
\newblock On continuity of quasimorphisms for symplectic maps.
\newblock \emph{Perspectives in Analysis, Geometry, and Topology: On the
  Occasion of the 60th Birthday of Oleg Viro}, pages 169--197, 2012.

\bibitem[Evans(2011)]{Eva11}
J.~D. Evans.
\newblock Symplectic mapping class groups of some {S}tein and rational
  surfaces.
\newblock \emph{J. Symplectic Geom.}, 9\penalty0 (1):\penalty0 45--82, 2011.
\newblock ISSN 1527-5256.

\bibitem[Hofer(1990)]{HH90}
H.~Hofer.
\newblock On the topological properties of symplectic maps.
\newblock \emph{Proceedings of the Royal Society of Edinburgh Section A:
  Mathematics}, 115\penalty0 (1-2):\penalty0 25--38, 1990.

\bibitem[Kawamoto(2022{\natexlab{a}})]{YK22}
Y.~Kawamoto.
\newblock Homogeneous quasimorphisms, ${C}^{0}$-topology and {L}agrangian
  intersection.
\newblock \emph{Commentarii Mathematici Helvetici}, 97\penalty0 (2):\penalty0
  209--254, 2022{\natexlab{a}}.

\bibitem[Kawamoto(2022{\natexlab{b}})]{YK22-1}
Y.~Kawamoto.
\newblock On ${C}^{0}$-continuity of the spectral norm for symplectically
  non-aspherical manifolds.
\newblock \emph{International Mathematics Research Notices}, 2022\penalty0
  (21):\penalty0 17187--17230, 2022{\natexlab{b}}.

\bibitem[Lalonde and McDuff(1995)]{LM95}
F.~Lalonde and D.~McDuff.
\newblock The geometry of symplectic energy.
\newblock \emph{Annals of Mathematics}, pages 349--371, 1995.

\bibitem[Le~Roux(2007)]{LR10}
F.~Le~Roux.
\newblock Six questions, a proposition and two pictures on {H}ofer distance for
  {H}amiltonian diffeomorphisms on surfaces.
\newblock \emph{Symplectic topology and measure preserving dynamical systems},
  512:\penalty0 33--40, 2007.

\bibitem[Le~Roux(2010)]{LeR10}
F.~Le~Roux.
\newblock Simplicity of $\mathrm{{H}omeo}(\mathbb{D}^{2}, \mathrm{area})$ and
  fragmentation of symplectic diffeomorphisms.
\newblock \emph{Journal of Symplectic Geometry}, 8\penalty0 (1):\penalty0
  73--93, 2010.

\bibitem[Mailhot(2022)]{PA22}
P.-A. Mailhot.
\newblock The spectral diameter of a {L}iouville domain.
\newblock \emph{arXiv preprint arXiv:2205.04618}, 2022.

\bibitem[McDuff and Salamon(2012)]{jhol}
D.~McDuff and D.~Salamon.
\newblock \emph{{$J$}-holomorphic curves and symplectic topology}, volume~52 of
  \emph{American Mathematical Society Colloquium Publications}.
\newblock American Mathematical Society, Providence, RI, second edition, 2012.

\bibitem[McDuff and Salamon(2017)]{MS2}
D.~McDuff and D.~Salamon.
\newblock \emph{Introduction to symplectic topology}, volume~27.
\newblock Oxford University Press, 2017.

\bibitem[Polterovich(1993)]{LP93}
L.~Polterovich.
\newblock Symplectic displacement energy for {L}agrangian submanifolds.
\newblock \emph{Ergodic Theory and Dynamical Systems}, 13\penalty0
  (2):\penalty0 357--367, 1993.

\bibitem[Polterovich and Shelukhin(2021)]{PS23}
L.~Polterovich and E.~Shelukhin.
\newblock {L}agrangian configurations and {H}amiltonian maps.
\newblock \emph{arXiv preprint arXiv:2102.06118}, 2021.

\bibitem[Viterbo(1992)]{Vit92}
C.~Viterbo.
\newblock Symplectic topology as the geometry of generating functions.
\newblock \emph{Mathematische Annalen}, 292\penalty0 (1):\penalty0 685--710,
  1992.

\end{thebibliography}
\nocite{*}

\end{document}